\theoremstyle{plain}
\newtheorem{theorem}{Theorem}
\newtheorem{lemma}{Lemma}[section]
\newtheorem{cor}[lemma]{Corollary}
\theoremstyle{definition}
\newtheorem{defin}[lemma]{Definition}
\newtheorem{rem}[lemma]{Remark}
\newtheorem{exam1}[lemma]{Example}
\newtheoremstyle{example}
  {5pt}
  {5pt}
  {}
  {}
  {\bf}
  {.}
  {\newline}
  {}
\theoremstyle{example}
\newtheorem{exam}[lemma]{Example}
\newcommand{\Nset}{\mathbb{N}}
\newcommand{\Rset}{\mathbb{R}}
\newcommand{\Zset}{\mathbb{Z}}
\newcommand{\Qset}{\mathbb{Q}}
\newcommand{\Cset}{\mathbb{C}}
\newcommand{\PP}{\mathbb{P}}
\def\footnotesize{\small}
\long\def\symbolfootnote[#1]#2{\begingroup%
\def\thefootnote{\fnsymbol{footnote}}\footnote[#1]{#2}\endgroup}
\title{Finite-state Markov Chains Obey Benford's Law}
\author{%
Bahar Kaynar\\
{\it Vrije Universiteit, Amsterdam, The Netherlands}\\
\and
Arno Berger\\
{\it University of Alberta, Edmonton, Canada}\\
\and
Theodore P. Hill\\
{\it Georgia Institute of Technology, Atlanta, USA}\\
\and
Ad Ridder\\
{\it Vrije Universiteit, Amsterdam, The Netherlands}\\
}
\date{\today}
\begin{document}
\maketitle
%

\begin{abstract}
A sequence of real numbers $(x_n)$ is Benford if the significands,
i.e.\ the fraction parts in the floating-point representation of
$(x_n)$, are distributed logarithmically. Similarly, a discrete-time
irreducible and aperiodic finite-state Markov chain with probability
transition matrix $P$ and limiting matrix $P^*$ is Benford if every
component of both sequences of matrices $(P^n - P^*)$ and
$(P^{n+1}-P^n)$ is Benford or eventually zero. Using recent tools
that established Benford behavior both for Newton's method and for
finite-dimensional linear maps, via the classical theories of uniform
distribution modulo $1$ and Perron-Frobenius, this paper derives a
simple sufficient condition (``nonresonance'') guaranteeing that $P$,
or the Markov chain associated with it, is Benford. This result in
turn is used to show that almost all Markov chains are Benford, in
the sense that if the transition probabilities are chosen independently
and continuously, then the resulting Markov chain is Benford with
probability one. Concrete examples illustrate the various cases that
arise, and the theory is complemented with several simulations and
potential applications.
\end{abstract}

\par\bigskip\noindent
{\bf Keywords:} Markov chain, Benford's Law, uniform distribution
modulo $1$, significant digits, significand, $n$-step
transition probabilities, stationary distribution.

\symbolfootnote[0]{ \noindent Kaynar and Hill were supported
by grants 400-06-044 and 040-11-091 from The Netherlands Organization
for Scientific Research (NWO); Berger was supported by an NSERC Discovery
Grant.
\par Corresponding author's address: Ad Ridder, Department of
Econometrics and Operations Research, Vrije Universiteit Amsterdam,
The Netherlands; email: \url{aridder@feweb.vu.nl}}

\small
\renewcommand{\baselinestretch}{1.4}
\renewcommand{\arraystretch}{0.75}
\normalsize
%

\section{Introduction}\label{s:intro}

Benford's Law (BL) is the widely-known logarithmic probability
distribution on significant digits (or equivalently, on significands),
and its most familiar form is the special case of first significant
digits (base $10$), namely,
\begin{equation}
\label{eq:1} \PP \left({D_1=d_1}\right)
= \log_{10}\left( 1 + \frac{1}{d_1}\right),
\quad \forall d_1\in \{ 1,2,\ldots,9\} \, ,
\end{equation}
where for each $x\in \Rset^+$, the number $D_1(x)$ is the \emph{first
significant digit\/} (base $10$) of $x$, i.e.\ the unique integer $d
\in \{1,2,\ldots,9 \}$ satisfying $10^k d \le x < 10^k (d+1)$ for
some, necessarily unique, $k \in \Zset$. Thus, for example,
$D_1(30122)=D_1(0.030122)=D_1(3.0122)=3$, and \eqref{eq:1} implies
that
$$
\PP(D_1=1)  = \log_{10}2 \cong 0.301 \, , \quad
\PP(D_1=2)  = \log_{10}(3/2) \cong 0.176\, , \quad \mbox{etc.,}
$$
see also Table \ref{tab:5} below.

In a form more complete than \eqref{eq:1}, BL is a statement about
joint distributions of the first $n$ significant digits (base $10$)
for any $n\in\Nset$, namely,
\begin{align}
\PP \bigl( (D_1,D_2,D_3,& \ldots,D_n)  =  (d_1,d_2,d_3,\ldots,d_n)\bigr) \nonumber \\
     & =\log_{10} \left( \sum\nolimits_{j=1}^n 10^{n-j}d_j + 1\right)-\log_{10}
     \left( \sum\nolimits_{j=1}^n 10^{n-j}d_j \right) \label{eq:2} \\
     & = \log_{10} \left( 1+ \frac{1}{\sum_{j=1}^n 10^{n-j}d_j}\right), \nonumber
\end{align}
where $d_1\in \{1,2,\ldots,9\}$ and $d_j \in \{0,1,2,\ldots,9\}$ for
$j\ge 2$, and $D_2,D_3$, etc.\ represent the second, third, etc.\
\emph{significant digit functions} (base $10$). Thus, for example,
$D_2(30122)=D_2(0.030122)=D_2(3.0122)=0$, and a special case of
\eqref{eq:2} is
\[
\PP\bigl((D_1,D_2,D_3)=(3,0,1)\bigr) =
\log_{10}302-\log_{10}301=\log_{10}\left( 1 +
\frac{1}{301}\right) \cong 0.00144\, .
\]
Formally, for every $n \in \Nset$, $n \ge 2$, the number $D_n(x)$, the
\emph{n-th significant digit\/} (base $10$) of $x\in\Rset^+$, is defined
inductively as the unique integer $d \in \{0,1,2,\ldots,9\}$ such that
\[
10^k \left( d+  \sum\nolimits_{j=1}^{n-1} 10^{n-j} D_j(x) \right) \le x <
10^k \left( d+1 + \sum\nolimits_{j=1}^{n-1} 10^{n-j} D_j(x) \right)
\]
for some (unique) $k \in \Zset$.

The formal probability framework for the significant-digit law is
described in \cite{Hill95,Hill96}. The sample space is the set of
positive reals, and the $\sigma$-algebra of events is the
$\sigma$-algebra generated by the (decimal) \emph{significand} (or
\emph{mantissa}) \emph{function} $S : \Rset^+ \rightarrow [1,10)$,
where $S(x)$ is the unique number $s \in [1,10)$ such that $x=10^k
s$ for some $k \in \Zset$. Equivalently, the significand events are
the sets in the $\sigma$-algebra generated by the significant digit
functions $D_1,D_2, D_3$, etc. The probability measure on this sample
space associated with BL is
$$
\PP \left(S \le t \right) = \log_{10} t\, , \quad \forall
t \in [1,10) \, .
$$
It is easy to see that the significant digit functions $D_1$ and
$D_2,D_3$, etc.\ are well-defined $\{1,2, \dots, 9\}$- and
$\{0,1,2,\ldots,9 \}$-valued random variables, respectively, on this
probability space with probability mass functions as given in
\eqref{eq:1} and \eqref{eq:2}.
\par\medskip\noindent
\textbf{Note.} Throughout this article, all results are restricted to
decimal (base $10$) significant digits, and accordingly $\log$ always
denotes the base $10$ logarithm. For notational convenience, $D_n(0):=0$
for all $n\in \Nset$. The results carry over easily to arbitrary bases
$b \in \Nset\setminus \{ 1\}$, as is evident from \cite{Berger05b},
where the essential difference is replacing $\log_{10}$ by $\log_b$,
and the decimal significant digits by the base-$b$ significant digits.
\par\medskip\noindent
Benford's Law is now known to hold in great generality, e.g.\ for
classical combinatorial sequences such as $(2^n)$, $(n!)$ and the
Fibonacci numbers $(F_n)$; iterations of linearly- or
nonlinearly-dominated functions; solutions of ordinary differential
equations; products of independent random variables; random mixtures
of data; and random maps (e.g., see
\cite{Berger05a,Berger07,Berger08,Diaconis79,Hill96}).
Table \ref{tab:5} compares the empirical frequencies of $D_1$ for the
first $1000$ terms of the sequences $(2^n)$, $(n!)$ and $(F_n)$. These
empirical frequencies illustrate what it means to follow BL and also
foreshadow the simulations in Section \ref{s:sim}.

\begin{table}[h]
\vspace*{4mm}
\begin{center}
\small{
\begin{tabular}{c c c c | c }
  \cline{1-5}
  & & & & \\[-3mm]
  $D_1$ & $(n!)$ & $(2^n)$
  & $(F_n)$ &  Benford \\[2mm]
  \hline \hline
  & & & & \\[-3mm]
  1 & 0.293 & 0.292 & 0.301 &  0.30103 \\
  2 & 0.176 & 0.180 & 0.176 &  0.17609 \\
  3 & 0.124 & 0.126 & 0.126 &  0.12493 \\
  4 & 0.102 & 0.098 & 0.096 &  0.09691 \\
  5 & 0.087 & 0.081 & 0.079 &  0.07918 \\
  6 & 0.069 & 0.068 & 0.067 &  0.06694 \\
  7 & 0.051 & 0.057 & 0.057 &  0.05799 \\
  8 & 0.051 & 0.053 & 0.053 &  0.05115 \\
  9 & 0.047 & 0.045 & 0.045 &  0.04575 \\[1.5mm]
  \hline \hline\\[-6mm]
\end{tabular}
}
\end{center}
\caption{\textit{Empirical frequencies of $D_1$ for the first $1000$
terms of the sequences $(2^n)$, $(n!)$ and the Fibonacci numbers
$(F_n)$, as compared with the Benford probabilities.}} \label{tab:5}
\end{table}
The main contribution of this article is to adapt recent results on
BL in the multi-dimensional setting (\cite{Berger05b}) in order to
establish BL in finite-dimensional, time-homogeneous Markov chains,
and to suggest several applications including error analysis in
numerical simulations of $n$-step transition matrices.
\par
Concretely, given the transition matrix $P$ of a finite-state Markov
chain (i.e., $P$ is a row-stochastic matrix), a common problem is to
estimate the limit $P^* = \lim_{n \rightarrow \infty} P^n$. The two
main theoretical results below, Theorems \ref{thm:1} and \ref{thm:2},
respectively, show that under a natural condition (``nonresonance'')
every component of the sequence of matrices $(P^n-P^*)$ and
$(P^{n+1}-P^{n})$ obeys BL, and that this behavior is typical, i.e.,
it occurs for almost all Markov chains. Simulations are provided for
illustration, followed by several potential applications including
the estimation of roundoff errors incurred when estimating $P^*$ from
$P^n$, and possible (partial negative) statistical tests to decide
whether data comes from a finite-state Markov process.

%

\section{Benford Markov chains and main tools}\label{s:main}

The set of natural, integer, rational, positive real, real and
complex numbers are symbolized by $\Nset, \Zset, \Qset, \Rset^+,
\Rset$ and $\Cset$, respectively. The real part, imaginary part,
complex conjugate and absolute value (modulus) of a number $z\in
\Cset$ is denoted by $\mathfrak{Re}z,\mathfrak{Im}z,\bar{z}$ and
$|z|$, respectively. For $z \neq 0$, the argument $\arg z$ is the
unique number in $(-\pi,\pi ]$ that satisfies $z = |z|e^{i \arg
z}$. For ease of notation, $\arg 0:= 0$ and $\log 0 := 0$. The
cardinality of the finite set $A$ is $\# A$. Throughout this article,
the sequence $\bigl(a(1),a(2), a(3), \ldots\bigr)$ is denoted by
$\bigl(a(n)\bigr)$. Thus, for example,
$(\alpha^n)=(\alpha^1,\alpha^2,\alpha^3,\ldots)$ and $\left( P^{n+1} -
P^n\right)= \left(P^2-P^1, P^3-P^2, P^4-P^3,\ldots\right)$. Boldface
symbols indicate randomized quantities, e.g.\ $\bm{X}$ denotes a random
variable or vector and $\bm{P}$ a random transition probability matrix.
\begin{defin}
\label{def:1} A sequence $(x_n)$ of real numbers is \emph{Benford}
(``\emph{follows BL}'') if
\[
\lim\nolimits_{n \rightarrow \infty} \frac{\# \{ j \le n : S(|x_j|)\le t
\}}{n} = \log t\, , \quad \forall t \in [1,10)\, .
\]
\end{defin}
\par\medskip

\noindent
The main subject of this paper is the Benford behavior of finite-state
Markov chains. The theory uses three main tools: the classical theory
of \emph{uniform distribution modulo 1}, see e.g.\ \cite{Kuipers74};
recent results for BL in one- and multi-dimensional dynamical systems
(\cite{Berger05a,Berger05b}); and the classical Perron-Frobenius theory
for Markov chains, see e.g.\ \cite{Bremaud99,Serre02}. The first lemma
records the relationship between uniform distribution theory and BL,
and the second lemma is an application establishing BL for certain
basic sequences that will be used repeatedly below. Here and throughout,
the term \emph{uniformly distributed modulo 1} is abbreviated as
\emph{u.d.} mod 1.

\begin{lemma}[\cite{Diaconis79}]
\label{lem:3}  A sequence $(x_n)$ of real numbers is Benford if and
only if $\left(\log |x_n|\right)$ is u.d.\ {\rm mod} $1$.
\end{lemma}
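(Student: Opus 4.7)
The plan is to reduce the equivalence to the elementary identity linking significands and fractional parts of base-$10$ logarithms: for every $x \in \Rset^+$,
\[
S(x) = 10^{\{\log x\}},
\]
where $\{y\} := y - \lfloor y \rfloor$ denotes the fractional part of $y$. This follows immediately from the unique representation $x = 10^k S(x)$ with $k \in \Zset$ and $S(x) \in [1,10)$: taking $\log$ gives $\log x = k + \log S(x)$ with $\log S(x) \in [0,1)$, whence $\{\log x\} = \log S(x)$.

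Next I would introduce the strictly increasing bijection $s \mapsto t = 10^s$ between $[0,1)$ and $[1,10)$, under which $\log t = s$. For every $x_j \neq 0$, the condition $S(|x_j|) \le t$ is then equivalent to $\{\log |x_j|\} \le s$, so that
\[
\frac{\#\{j \le n : S(|x_j|)\le t\}}{n} = \frac{\#\{j \le n : \{\log |x_j|\}\le s\}}{n}
\]
for every $n$ and every corresponding pair $(s,t)$. The paper's convention $\log 0 := 0$ (which implicitly places $S(0) = 1$ via the identity above) makes both counts treat any vanishing entries of $(x_n)$ identically, so the displayed equality holds without exceptions. Passing to the limit, the left side tends to $\log t = s$ for every $t \in [1,10)$ if and only if the right side tends to $s$ for every $s \in [0,1)$, and the latter is exactly the definition of $(\log |x_n|)$ being u.d.\ mod $1$.

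There is essentially no hard step: the argument is a change of variable combined with the definition of u.d.\ mod $1$ tested at all cut-points $s \in [0,1)$. The only points requiring any care are (i) verifying that the half-open intervals $[0,1)$ and $[1,10)$ match up correctly under $t = 10^s$, so boundary values are handled consistently, and (ii) observing that zero terms of $(x_n)$ contribute in the same way to both counts, so they cannot produce any discrepancy between the two limits.
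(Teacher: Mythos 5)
Your argument is correct and is the standard one; note that the paper itself gives no proof of this lemma, simply citing Diaconis (1979), so you are supplying the argument the authors omit. The reduction via $S(x)=10^{\{\log x\}}$ and the monotone change of variable $t=10^{s}$ is exactly how the equivalence is established in the literature, and your two points of care (boundary matching and zero terms) are handled adequately. The only cosmetic remark is that the Weyl definition of u.d.\ mod $1$ is usually phrased with half-open intervals $[a,b)\subset[0,1)$ and strict inequality; passing between that and your cut-point condition with ``$\le s$'' is a routine sandwiching step that you could mention but that creates no gap, since the limiting distribution is atomless.
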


\noindent An immediate application of Lemma~\ref{lem:3} is the following
useful lemma.

\begin{lemma}[\cite{Berger05a}]
\label{lem:4} Let $(x_n)$ be Benford. Then for all $\alpha \in \Rset$
and $k \in \Zset$ with $\alpha k \neq 0$, the sequence $(\alpha x_n^k)$
is also Benford.
\end{lemma}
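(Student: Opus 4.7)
The plan is to reduce the claim to a statement about uniform distribution modulo $1$ via Lemma~\ref{lem:3}, and then invoke a standard preservation property of u.d.\ mod $1$ under affine transformations with nonzero integer multiplier.

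First I would observe that by Lemma~\ref{lem:3}, the sequence $(\alpha x_n^k)$ is Benford if and only if $(\log|\alpha x_n^k|)$ is u.d.\ mod $1$. Since $\alpha \neq 0$, whenever $x_n \neq 0$ we have
\[
\log|\alpha x_n^k| \;=\; \log|\alpha| \,+\, k\log|x_n|,
\]
so $(\log|\alpha x_n^k|)$ is, up to a set of indices of density zero on which $x_n = 0$, obtained from $(\log|x_n|)$ by the affine map $y \mapsto ky + \log|\alpha|$. The hypothesis that $(x_n)$ is Benford gives, via Lemma~\ref{lem:3}, that $(\log|x_n|)$ is u.d.\ mod $1$.

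Second, I would invoke the classical Weyl criterion: a sequence $(y_n)$ is u.d.\ mod $1$ if and only if $N^{-1}\sum_{n=1}^N e^{2\pi i h y_n} \to 0$ for every $h \in \Zset\setminus\{0\}$. Applied to $z_n := k y_n + c$ with $y_n = \log|x_n|$, $k \in \Zset\setminus\{0\}$ and $c = \log|\alpha|$, one has
\[
\frac{1}{N}\sum_{n=1}^N e^{2\pi i h z_n} \;=\; e^{2\pi i hc}\cdot \frac{1}{N}\sum_{n=1}^N e^{2\pi i (hk) y_n},
\]
and since $hk$ is a nonzero integer whenever $h$ is, the right-hand side tends to $0$ by u.d.\ of $(y_n)$. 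Hence $(z_n) = (\log|\alpha x_n^k|)$ is u.d.\ mod $1$, and a second application of Lemma~\ref{lem:3} yields the Benford property of $(\alpha x_n^k)$.

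The only bookkeeping issue, and in that sense the only ``obstacle,'' is that a Benford sequence may contain some zero terms, on which $\log|x_n|$ has been set to $0$ by convention. However, the set of indices $n$ with $x_n = 0$ must have natural density zero (otherwise a positive fraction of the values $\log|x_n|$ would be identically $0$, contradicting u.d.\ mod $1$ of $(\log|x_n|)$), and modifying a sequence on a density-zero subset affects neither the u.d.\ mod $1$ property nor the Benford property as given in Definition~\ref{def:1}. Once this is noted, the proof is a direct two-line reduction. The crucial hypothesis $\alpha k \neq 0$ is used precisely to ensure that $\log|\alpha|$ is a finite constant and that $k$ is a nonzero integer, so that Weyl's criterion indeed applies after the affine change of variable.
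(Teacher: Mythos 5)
Your proof is correct and follows essentially the same route as the source: the paper itself gives no argument, citing \cite{Berger05a}, where the lemma is obtained exactly by this reduction via Lemma~\ref{lem:3} together with the classical fact that uniform distribution mod $1$ is preserved under $y \mapsto ky + c$ for $k \in \Zset \setminus \{0\}$ (Weyl's criterion). Your extra care with the convention $\log 0 := 0$ and the density-zero set of vanishing terms is sound but not a substantive addition.
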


Lemmas \ref{lem:3} and \ref{lem:4} are fundamental tools for analyzing
BL in the setting of multi-dimensional dynamical systems (\cite{Berger05b}),
and although those results do not apply directly to the Markov chain
setting, the first part of the theory established below relies heavily
on those ideas specialized to the case of row-stochastic matrices.
\par
The next lemma follows easily from known results. It is included
here since these observations play a central role in determining
whether a Markov chain is Benford, as illustrated in the three
examples following the lemma. Stronger conclusions are possible,
as suggested in Example~\ref{ex:14}(iii) below, but are not needed
here.

\begin{lemma}
\label{lem:5} Let $a, b, \alpha, \beta$ be real numbers with $a\ne 0$
and $|\alpha| > |\beta|$. Then $(a \alpha^n + b \beta^n)$ is Benford
if and only if $\log|\alpha|$ is irrational.
\end{lemma}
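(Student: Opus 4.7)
The plan is to reduce the statement to a question about uniform distribution modulo $1$ via Lemma~\ref{lem:3}, then peel off the dominant exponential factor and use a standard perturbation argument. Concretely, I would first apply Lemma~\ref{lem:3} to note that $(a\alpha^n + b\beta^n)$ is Benford if and only if the sequence $y_n := \log|a\alpha^n + b\beta^n|$ is u.d.\ mod $1$. Since $|\alpha|>|\beta|$ and $a\neq 0$, there exists $N$ such that $a\alpha^n + b\beta^n \neq 0$ for all $n\ge N$ (for finitely many small $n$ the convention $\log 0 := 0$ makes no difference to the asymptotic density in Definition~\ref{def:1}). For $n\ge N$, factor out $a\alpha^n$ to write
\[
y_n = \log|a| + n\log|\alpha| + \log\Bigl|1 + \frac{b}{a}\Bigl(\frac{\beta}{\alpha}\Bigr)^{\!n}\Bigr|.
\]

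Next, since $|\beta/\alpha|<1$ the last summand $\varepsilon_n := \log|1 + (b/a)(\beta/\alpha)^n|$ tends to $0$ as $n\to\infty$ (trivially so if $b=0$ or $\beta=0$). Setting $z_n := \log|a| + n\log|\alpha|$, one has $y_n - z_n = \varepsilon_n \to 0$, and it is a standard fact from the theory of uniform distribution (see, e.g., \cite{Kuipers74}) that if $y_n - z_n \to 0$ then $(y_n)$ is u.d.\ mod $1$ if and only if $(z_n)$ is. Adding the constant $\log|a|$ does not affect uniform distribution modulo $1$, so $(y_n)$ is u.d.\ mod $1$ if and only if the arithmetic progression $(n\log|\alpha|)$ is.

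Finally, I would invoke Weyl's classical equidistribution theorem: for $\gamma\in\Rset$, the sequence $(n\gamma)$ is u.d.\ mod $1$ if and only if $\gamma$ is irrational. Applying this with $\gamma=\log|\alpha|$ completes both directions of the equivalence.

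I do not expect any real obstacle here; the only mildly delicate point is the perturbation step, which requires noting that a vanishing additive error does not disturb equidistribution mod $1$ (and that finitely many exceptional terms where $a\alpha^n + b\beta^n$ vanishes cannot affect the asymptotic density). Everything else is an immediate consequence of Lemma~\ref{lem:3} and Weyl's theorem.
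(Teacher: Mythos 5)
Your proof is correct and follows essentially the same route as the paper's (which is only a brief sketch citing Lemma~\ref{lem:3}, Lemma~\ref{lem:4} and Weyl's theorem): reduce to uniform distribution mod $1$, factor out the dominant term $a\alpha^n$, absorb the vanishing perturbation and the additive constant, and apply Weyl's equidistribution criterion. You have simply spelled out the details that the paper leaves implicit.
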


\begin{proof}
Since $|\alpha| > |\beta|$, the significands of $\alpha^n$ dominate
those of $\beta^n$ asymptotically, so the conclusion follows from
Lemma \ref{lem:3}, Lemma \ref{lem:4} and Weyl's classical theorem
that iterations of an irrational rotation on the circle are
uniformly distributed.
\end{proof}

\begin{exam1}\label{ex:14}
\mbox{}
\begin{enumerate}[(i)]
\item
The sequences $(2^n)$, $(0.2^n)$, $(3^n)$, $(0.3^n)$ are Benford, whereas
$(10^n)$, $(0.1^n)$, $\left(\sqrt{10}^n\right)$ are not Benford.
\item The sequence $\bigl(0.01 \cdot 0.2^n + 0.2 \cdot 0.01^n\bigr)$ is
Benford, whereas $\bigl( 0.1 \cdot 0.02^n + 0.02 \cdot 0.1^n\bigr)$ is
not Benford.
\item
The sequence $\bigl( 0.2^n + (-0.2)^n\bigr)$ is not Benford, since all
odd terms are zero, but $\bigl( 0.2^n + (-0.2)^n + 0.03^n \bigr)$ is Benford
--- although this does not follow directly from Lemma \ref{lem:5}.
\end{enumerate}
\end{exam1}

\noindent
\textbf{Notation.} For every integer $d > 1$, the set of all row-stochastic
matrices of size $d\times d$ is denoted by $\mathcal{P}_d$.

\medskip
Now, let $P \in \mathcal{P}_d$ be the transition probability matrix of a
Markov chain. All Markov chains (or their associated matrices $P$)
considered in this work are assumed to be finite-state (with $d>1$
states), irreducible and aperiodic. Let $\lambda_1,\ldots,\lambda_s$,
$s \leq d$, be the \emph{distinct\/} (possibly non-real) eigenvalues
of the stochastic matrix $P$, with corresponding spectrum
$\sigma(P)=\{\lambda_1,\ldots,\lambda_s \}$, i.e., $\sigma(P)$ is
the set of all distinct eigenvalues. Accordingly, the set
$\sigma(P)^+ = \{ \lambda \in \sigma(P) : \mathfrak{Im}\lambda \ge 0 \}$
forms the ``upper half'' of the spectrum. The usage of $\sigma(P)^+$
refers to the fact that non-real eigenvalues of real matrices always
occur in conjugate pairs, so the set $\sigma(P)^+$ only includes one
of the conjugates. Without loss of generality, throughout this work it
is also assumed that the eigenvalues in $\sigma(P)$ are labeled such that
\[
|\lambda_1| \ge |\lambda_2| \ge \ldots \ge |\lambda_s|\, .
\]
Furthermore, the column vectors $u_1,\ldots,u_s$ and $v_1,\ldots,v_s$
denote associated sequences of left and right eigenvectors, respectively.
The third main tool in this paper is the classical Perron-Frobenius
theory of Markov chains, and the following lemma summarizes some of
the special properties of transition matrices for ease of reference.
\begin{lemma}
\label{lem:6} Suppose $P \in \mathcal{P}_d$ is irreducible and aperiodic.
Then $\lambda_1 = 1 > |\lambda_{\ell}|$ for all $\ell=2,\ldots,s$, and
there exists a $P^*\in \mathcal{P}_d$ such that
\begin{enumerate}
\item $\lim_{n \rightarrow \infty} P^n = P^*$;
\item for every $n\in \Nset$,
\begin{equation}
\label{eq:7}P^n - P^*  = \lambda_2^n C_2 + \ldots + \lambda_s^n C_s\, ,
\end{equation}
where each $C_\ell$ is a $d \times d$-matrix whose components $C_{\ell}^{(i,j)}$
are polynomials in $n$ with complex coefficients and degrees $k_\ell^{(i,j)} < d$.
\end{enumerate}
\end{lemma}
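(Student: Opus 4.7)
The plan is to combine two classical pillars: the Perron--Frobenius spectral theory for irreducible aperiodic stochastic matrices, which pins down the spectrum of $P$, and the Jordan canonical form, which produces the explicit expansion \eqref{eq:7}.

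First I would handle the spectral assertion $\lambda_1 = 1 > |\lambda_{\ell}|$ for $\ell \ge 2$. Row-stochasticity gives $P\mathbf{1} = \mathbf{1}$, where $\mathbf{1}$ is the all-ones column vector, so $1 \in \sigma(P)$; Gershgorin's disc theorem applied to $P$ forces $|\lambda| \le 1$ for every $\lambda \in \sigma(P)$. The classical Perron--Frobenius theorem in the irreducible aperiodic case (see e.g.\ \cite{Bremaud99,Serre02}) sharpens this: the eigenvalue $1$ is simple, both algebraically and geometrically, and every other eigenvalue satisfies $|\lambda| < 1$ strictly. In the paper's ordering this is precisely $\lambda_1 = 1 > |\lambda_2| \ge \ldots \ge |\lambda_s|$.

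Next I would derive \eqref{eq:7} from a Jordan decomposition $P = SJS^{-1}$, with $J$ block-diagonal and blocks grouped by eigenvalue. Because $\lambda_1 = 1$ is simple, the block attached to it is a $1 \times 1$ scalar, and conjugating back yields a rank-one, $n$-independent matrix $E_1 := S\,\mathrm{diag}(1,0,\ldots,0)\,S^{-1}$. For each $\ell \ge 2$, a Jordan block of size $k$ attached to $\lambda_\ell$ raised to the $n$-th power is given by the standard binomial formula
\[
\lambda_\ell^n \sum_{j=0}^{k-1} \binom{n}{j} \lambda_\ell^{-j} N_k^j,
\]
where $N_k$ is the $k \times k$ nilpotent shift. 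Collecting the contributions of all Jordan blocks at $\lambda_\ell$ and transporting back by $S, S^{-1}$ produces a matrix of the form $\lambda_\ell^n C_\ell$, each entry of $C_\ell$ being a polynomial in $n$ with complex coefficients of degree at most (size of the largest Jordan block at $\lambda_\ell$) $- 1$, which is strictly less than the algebraic multiplicity of $\lambda_\ell$ and therefore less than $d$.

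Setting $P^* := E_1$ now yields \eqref{eq:7} directly, since $P^n = E_1 + \sum_{\ell=2}^s \lambda_\ell^n C_\ell$. Assertion (i) follows by letting $n \to \infty$: because $|\lambda_\ell| < 1$ for every $\ell \ge 2$, each product $\lambda_\ell^n \cdot (\text{polynomial in } n)$ tends to $0$, so $P^n \to P^*$. Finally, to verify $P^* \in \mathcal{P}_d$, I would note that row sums are preserved by multiplication with $P$, so $P^n \mathbf{1} = \mathbf{1}$ for all $n$, whence $P^* \mathbf{1} = \mathbf{1}$; and since each $P^n$ has nonnegative entries, the componentwise limit $P^*$ does too. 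There is no real obstacle here beyond careful bookkeeping: the two nontrivial inputs (Perron--Frobenius and the Jordan form) are quoted, and the degree bound $k_\ell^{(i,j)} < d$ is just the observation that Jordan block sizes for $P$ sum at most to $d$.
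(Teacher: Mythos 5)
Your argument is correct and is essentially the proof the paper invokes: the paper's own ``proof'' is a one-line citation of the Perron--Frobenius theorem, and your combination of Perron--Frobenius (for $\lambda_1=1>|\lambda_\ell|$) with the Jordan-form binomial expansion (for \eqref{eq:7} and the degree bound $k_\ell^{(i,j)}<d$) is exactly the standard unpacking of that citation, including the verification that $P^*\in\mathcal{P}_d$. The only blemish is the factor $\lambda_\ell^{-j}$ in your Jordan-block formula, which is undefined when $\lambda_\ell=0$; in that degenerate (non-invertible) case the expansion \eqref{eq:7} in fact only holds once $n$ exceeds the size of the largest Jordan block at $0$ --- a caveat the lemma's statement itself glosses over and which is harmless for the paper's later use, since nonresonance forces $P$ to be invertible.
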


\begin{proof}
Immediate from the Perron-Frobenius theorem, see e.g.\ $\cite{Seneta81}$.
\end{proof}

\noindent
The second dominant eigenvalue $\lambda_2$ plays an important role
whenever $C_2^{(i,j)} \neq 0$. The analysis is especially straightforward
if all eigenvalues are simple, i.e., if $\# \sigma(P) = d$. In this case,
for every $n\in \Nset$,
\begin{equation}\label{eq:specdecom}
P^n - P^* =
\sum\nolimits_{\ell = 2}^d \lambda_\ell^n B_{\ell}
\quad \mbox{ and } \quad
P^{n+1}-P^n
= \sum\nolimits_{\ell=2}^{d} \lambda_{\ell}^n (\lambda_{\ell}-1)B_{\ell}
\end{equation}
holds with the $d-1$ matrices $B_{\ell}= v_{\ell} u_{\ell}^\top/ v_{\ell}^\top u_{\ell}
\in \Cset^{d\times d}$. Next is the key definition in this paper.
\begin{defin}
\label{def:2} A Markov chain, or its associated transition probability
matrix $P$, is \emph{Benford} if each component of $\left(P^n -
P^* \right)$ and $\left(P^{n+1} - P^{n} \right)$ is either Benford or
eventually zero.
\end{defin}

\noindent
The following examples illustrate the notions of Benford and non-Benford
Markov chains.
\begin{exam1} \label{ex:2} (Examples of Benford Markov chains)
\begin{enumerate}
\item Let $d=2$ and $ P= \left[ \begin{array}{cc}
           0.7  &  0.3 \\
           0.4  &  0.6
           \end{array}\right]
       $.
By \cite[p.\ 432]{Feller50}, $ P^* = {\displaystyle \frac17} \left[ \begin{array}{cc}
           4  &  3 \\
           4  &  3
           \end{array}\right]$, and
\[
P^n - P^* = \frac{0.3^n}{7} \left[ \begin{array}{rr}
                 3  &  -3 \\
                -4  &  4
                \end{array} \right] \quad \mbox{and} \quad
P^{n+1} - P^{n} =  0.3^{n} \left[ \begin{array}{rr}
                 -0.3  &  0.3 \\
                0.4  &  -0.4
                \end{array} \right]
\]
holds for all $n\in \Nset$.
In both sequences every component is a multiple of $(0.3^n)$, and hence
Benford by Lemma \ref{lem:5} since $\log 0.3$ is irrational. The
two-dimensional case will be discussed in more generality in Examples
\ref{ex:7} and \ref{ex:9}.
\item Let $d=3$ and $ P= \left[ \begin{array}{ccc}
           0.9  &  0.0 & 0.1 \\
           0.6  &  0.3 & 0.1 \\
           0.1  &  0.0 & 0.9
\end{array}\right]
$. It is easy to check via spectral decomposition (e.g.\
\cite{Bremaud99}) that the eigenvalues of $P$ are $\lambda_1
= 1$, $\lambda_2 = 0.8$ and $\lambda_3 = 0.3$, and
$P^* = \left[ \begin{array}{ccc}
              0.5   &              0   &  0.5 \\
              0.5   &              0   &  0.5 \\
              0.5   &              0   &  0.5
\end{array}\right]$. The three eigenvalues are distinct, leading to
$$
P^n - P^*  =  0.8^n
\left[ \begin{array}{rrr}
              0.5   &               0  & -0.5 \\
              0.5   &              0   & -0.5 \\
              -0.5   &              0   &  0.5
\end{array}\right]  + 0.3^n
\left[ \begin{array}{rrr}
               0 &  0 & 0 \\
               -1 & 1 & 0 \\
               0 & 0 & 0
\end{array}\right] \, ,
$$
as well as
$$
P^{n+1} - P^{n}  = 0.8^{n}
\left[ \begin{array}{rrr}
-0.1   &              0   &  0.1 \\
-0.1   &              0   &  0.1 \\
 0.1   &              0   & -0.1
\end{array}\right]  +  0.3^{n}
\left[ \begin{array}{rrr}
0 &  0 & 0 \\
0.7 & -0.7 & 0 \\
0 & 0 & 0
\end{array}\right] \, .
$$
As can be seen directly, in both cases the components $(1,2)$
and $(3,2)$ are zero for all $n$, whereas by Lemma \ref{lem:5}
all other components follow BL. Hence, the Markov chain defined by the
transition probability matrix $P$ is Benford.
\par
As will be observed later, the moduli of the eigenvalues as well as
a specific rational relationship between them play a crucial
role in the analysis of BL in Markov chains, similar to the results
in \cite{Berger05b}.
\end{enumerate}
\end{exam1}
\begin{exam1}
\label{ex:3} (Examples of non-Benford Markov chains)
\begin{enumerate}[(i)]
\item Let $d=2$ and
$ P= \left[\begin{array}{cc}
           0.2  &  0.8 \\
           0.1  &  0.9
\end{array}\right]
$, hence $P^*= {\displaystyle \frac19} \left[\begin{array}{cc}
           1  &  8 \\
           1  &  8
\end{array}\right]$ and, for every $n\in \Nset$,
\[
P^n - P^* = \frac{0.1^n}{9}
\left[\begin{array}{rr}
           8  &  -8 \\
           -1  &  1
\end{array}\right]
\quad \mbox{and} \quad
P^{n+1} - P^{n} = 0.1^{n}
\left[ \begin{array}{rr}
           -0.8  &  0.8 \\
           0.1  &  -0.1
\end{array}\right] \, .
\]
Since $\log 0.1$ is rational, Lemma \ref{lem:5} implies that no component
of $\left( P^n - P^*\right)$ or $ \left( P^{n+1} - P^{n} \right)$ is
Benford. For example, $D_1\bigl( |( P^{n}-P^*)^{(1,1)}|\bigr) = 8$
for all $n\in \Nset$.

\item Let $d=3$ and $
P= \left[ \begin{array}{ccc}
           0.0  &  0.1 & 0.9 \\
           0.1  &  0.3 & 0.6 \\
           0.1  &  0.1 & 0.8
\end{array}\right]
$. The eigenvalues of $P$ are $\lambda_1=1$, $\lambda_2=0.2$ and
$\lambda_3 = -0.1$. Since these three eigenvalues are distinct, again
by spectral decomposition,
$$
P^n - P^* = \frac{0.2^n}{8}
\left[ \begin{array}{rrr}
           0   &  -1  &  1 \\
           0   &   7  & -7 \\
           0   &  -1  &  1
\end{array}\right]+ \frac{(-0.1)^n}{11}
\left[\begin{array}{rrr}
            10 &  0 & -10 \\
            -1 &  0 &  1  \\
            -1 &  0 & 1
\end{array} \right] \, ,
$$
as well as
$$
P^{n+1} - P^{n}  = 0.2^{n}
\left[ \begin{array}{rrr}
          0    &  0.1  & -0.1 \\
          0   &  -0.7  &  0.7 \\
          0   &   0.1  & -0.1
\end{array}\right]  +  (-0.1)^{n}
\left[ \begin{array}{rrr}
         -1  &  0 &   1 \\
         0.1 &  0 & -0.1\\
         0.1 &  0 & -0.1
\end{array} \right] \, .
$$
The first column of $B_2$ is zero, hence for that column the
relevant eigenvalue is $\lambda_3 = -0.1$. Since $\log 0.1$ is
rational, no component in the first column of either sequence
$\left( P^n - P^* \right)$ and $\left( P^{n+1} - P^{n} \right)$
follows BL, i.e., $P$ is not Benford.
\end{enumerate}
\end{exam1}


\section{Sufficient condition that a Markov chain is Benford}\label{s:thm1}

To analyze the behavior of the sequences $\left( P^n - P^* \right)$ and
$ \left( P^{n+1} - P^{n} \right)$ associated with a Markov chain, a
nonresonance condition on $P$ will be helpful. Recall that real numbers
$x_1,\ldots,x_k$ are \emph{rationally independent} (or $\Qset$-\emph{independent})
if $\sum_{j=1}^k q_j x_j=0$ with $q_1,\ldots,q_k\in\Qset$ implies that
$q_j=0$ for all $j=1, \ldots, k$; otherwise $x_1, \ldots, x_k$ are {\em rationally
dependent}.
\begin{defin}\label{def:4}
A stochastic matrix $P$ is \emph{nonresonant} if every nonempty subset
$\Lambda_0=\{ \lambda_{i_1}, \ldots , \lambda_{i_k} \} \subset \sigma(P)^+
\setminus \{\lambda_1\}$ with $|\lambda_{i_1}|=\ldots=|\lambda_{i_k}| = L_0$
satisfies $\# (\Lambda_0 \cap \Rset) \le 1$, and the numbers $1$, $\log L_0$
and the elements of $\frac1{2\pi} \arg \Lambda_0$ are rationally independent,
where
\[
{\textstyle \frac{1}{2\pi}}\arg \Lambda_0 := \left\{ {\textstyle\frac{1}{2\pi}} \arg
\lambda_{i_1},\ldots,{\textstyle\frac{1}{2\pi}}\arg \lambda_{i_k} \right\} \setminus
\left\{0,{\textstyle\frac{1}{2}} \right\}\, .
\]
A Markov chain is nonresonant whenever its transition probability matrix
is. A stochastic matrix or Markov chain is {\em resonant\/} if
it is not nonresonant.
\end{defin}

\noindent
Notice that for $P$ to be nonresonant, it is required specifically that the
logarithms of the moduli of all the eigenvalues other than $\lambda_1 = 1$
are irrational; in particular, $P$ has to be invertible. Theorem \ref{thm:1}
below establishes that nonresonance is sufficient for $P$ to be Benford.
There is a close correspondence between Definition \ref{def:4} of a nonresonant
matrix and the notion of a matrix not having $10$-{\em resonant spectrum},
as introduced in \cite{Berger05b}. The main difference is that the eigenvalue
$\lambda_1 = 1$ is excluded in Definition \ref{def:4}, whereas every stochastic
matrix has $10$-resonant spectrum.
\begin{exam1}
\label{ex:5} (Examples of nonresonant matrices)
\begin{enumerate}[(i)]
\item
Both transition matrices in Example~\ref{ex:2} are
nonresonant.
\item
Let $d=5$ and $ P= \left[ \begin{array}{lllll}
                    0.0  &  0.25 & 0.25 & 0.25 & 0.25 \\
                    0.25 &  0.0  & 0.25 & 0.25 & 0.25 \\
                    0.25 &  0.25 & 0.0  & 0.25 & 0.25 \\
                    0.25 &  0.25 & 0.25 & 0.0  & 0.25 \\
                    0.25 &  0.25 & 0.25 & 0.25 & 0.0
\end{array}\right]
$. The eigenvalues of $P$ are $\lambda_1=1$ and
$\lambda_2= -0.25$ (with multiplicity four), so  $\Lambda_0
= \{ -0.25 \}$, with $L_0=0.25$ and $\frac{1}{2\pi}\arg
\Lambda_0=\emptyset$. Since $\log 0.25$ is irrational, $P$ is
nonresonant.
\end{enumerate}
\end{exam1}

\begin{exam1}
\label{ex:6} (Examples of resonant matrices)
\begin{enumerate}[(i)]
\item Two real eigenvalues of opposite sign:
Let $d=3$ and $ P=\left[ \begin{array}{ccc}
    0.6 &   0.4 &   0.0  \\
    0.8 &   0.0 &   0.2\\
    0.0 &   0.6 &   0.4
\end{array}\right]
$.
The eigenvalues of $P$ are $\lambda_1=1$ and $\lambda_{2,3}= \pm\sqrt{0.2}$.
Notice that $\log|\lambda_2|=\log|\lambda_3|=-\frac12\log 5$ is
irrational. With $\Lambda_0 = \{ \sqrt{0.2}, - \sqrt{0.2}\}$ clearly
$\# (\Lambda_0 \cap \Rset)=2$, hence $P$ is resonant. The spectral decomposition
\eqref{eq:specdecom} yields
\[ (P^n-P^*)^{(1,1)} = 0.2\lambda_2^n + 0.2\lambda_3^n=
\begin{cases}
0.4\left(\sqrt{0.2}\right)^n  & \mbox{ if } n \mbox{ is even},\\
0  & \mbox{ if } n \mbox{ is odd},
\end{cases}
\]
showing that $P$ is not Benford either.
\item
Eigenvalues with rational logarithms:
Let $d=3$ and
$
P= \left[\begin{array}{ccc}
        0.0 &  0.1 & 0.9 \\
        0.5 &  0.1 & 0.4 \\
        0.3 &  0.3 & 0.4
\end{array}\right]
$. The eigenvalues are $\lambda_1 = 1$ and $ \lambda_{2,3} =
-0.25 \pm 0.05i\sqrt{15}$. Since $\log |\lambda_{2,3}|= -0.5$ is rational,
the matrix $P$ is resonant.
\item
Eigenvalues with rational argument: Let $d=3$ and
$
P = \left[ \begin{array}{ccc}
0.3 & 0.3 & 0.4\\
0.3 & 0.5 & 0.2 \\
0.1 & 0.7 & 0.2
\end{array}\right]
$. The eigenvalues are $\lambda_1 = 1$ and $\lambda_{2,3}= \pm 0.2i$. Note that
$\log|0.2i|= -1+\log 2$ is irrational, but $\frac1{2\pi} \arg(0.2i)=\tfrac14$ is
rational. Thus $P$ is resonant. Spectral decomposition gives
$B_1^{(2,2)}=B_2^{(2,2)}=\tfrac14$, hence
\[
(P^n-P^*)^{(2,2)} =\tfrac14 \bigl( (0.2i)^n+(-0.2i)^n\bigr)
=\begin{cases}
\tfrac12 \cdot (-1)^{n/2} \cdot 0.2^n & \quad\text{if } n \mbox{ is even}, \\
0 & \quad\text{if } n \mbox{ is odd},
\end{cases}
\]
which in turn shows that $P$ is not Benford.

\item
Eigenvalues leading to rational dependencies within
$\{1, \log L_0\}\cup \frac1{2\pi}\arg \Lambda_0$: Let $d=7$ and
$ P=\left[ \begin{array}{ccccccc}
    0.2  &  0.1 &   0.0 &   0.0 &   0.1 &   0.0 &   0.6\\
    0.1  &  0.1 &   0.1 &   0.1 &   0.2 &   0.0 &   0.4\\
    0.1  &  0.1 &   0.1 &   0.1 &   0.1 &   0.2 &   0.3\\
    0.0  &  0.2 &   0.3 &   0.0 &   0.2 &   0.0 &   0.3\\
    0.1  &  0.2 &   0.1 &   0.1 &   0.0 &   0.1 &   0.4\\
    0.2  &  0.0 &   0.2 &   0.1 &   0.1 &   0.0 &   0.4\\
    0.1  &  0.2 &   0.2 &   0.0 &   0.0 &   0.0 &   0.5
\end{array}\right]
$. The characteristic polynomial $\psi_P$ of $P$ factors as
$$
\psi_P(\lambda) = (\lambda - 1)\,\left(\lambda^2+0.1\lambda-0.01\right)\,
\left(\lambda^2-0.01(2-i)\right)\, \left(\lambda^2-0.01(2+i)\right) \, .
$$
The roots of the second factor are
$-\frac{1}{20}\left(1 \pm \sqrt{5}\right)$; the third factor has roots
\[ \pm {\textstyle \frac{1}{10}}\sqrt{2-i}=
\pm {\textstyle \frac{1}{20}}\left(\sqrt{4+2\sqrt{5}} - i\sqrt{-4+2\sqrt{5}}
\right)\, ,
\]
and the fourth factor has roots
\[ {\textstyle \pm \frac{1}{10}} \sqrt{2+i}=
\pm {\textstyle \frac{1}{20}} \left(\sqrt{4+2\sqrt{5}} + i\sqrt{-4+2\sqrt{5}}
\right)\, .
\]
Thus, the dominated positive spectrum is
\[   \sigma(P)^+\setminus\{\lambda_1\} =
 {\textstyle \frac{1}{20}} \left\{
 -(\sqrt{5}+1),\, \sqrt{5}-1,\,
 -2\sqrt{2-i},\, 2\sqrt{2+i}
 \right\}.\]
Clearly, the logarithms of the absolute values of the two real
eigenvalues are irrational. The four non-real eigenvalues all have the
same modulus $L_0 = \frac{1}{10}5^{1/4}$ (different from the two
real eigenvalues), and $\log L_0 = -1+\frac14\log 5$ is
irrational. Let $\Lambda_0=\frac{1}{10} \left\{-\sqrt{2-i},\, \sqrt{2+i}\right\}$.
Notice that $\arg(2\mp i)=\mp \arctan\frac12$, so
\[
{\textstyle \frac1{2\pi}} \arg \Lambda_0 = \left\{ {\textstyle \frac12} -
{\textstyle \frac1{4\pi}} \arctan {\textstyle \frac12} ,\;{\textstyle \frac1{4\pi}}
\arctan {\textstyle \frac12}
\right\} =: \{x_3, x_4 \}\, .
\]
Since
$$
-1 \cdot 1 + 0 \cdot \log L_0 + 2 \cdot x_3 + 2 \cdot x_4 = 0 \, ,
$$
the elements of $\{1, \log L_0\}\cup \frac1{2\pi}\Lambda_0$ are $\Qset$-dependent,
and hence $P$ is resonant.
\end{enumerate}
\end{exam1}

\par\medskip
\noindent
The first main theoretical result of this paper is
\begin{theorem}
\label{thm:1} Every nonresonant
irreducible and aperiodic finite-state Markov chain is Benford.
\end{theorem}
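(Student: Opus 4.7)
The plan is to combine the Perron--Frobenius decomposition from Lemma~\ref{lem:6} with a multi-dimensional Weyl equidistribution argument, specializing to the row-stochastic setting the strategy of \cite{Berger05b}. Fix an arbitrary component $(i,j)$ of $(P^n-P^*)$; the case of $(P^{n+1}-P^n)$ is parallel, since multiplying the spectral coefficient of $\lambda_\ell$ by $\lambda_\ell-1\neq 0$ preserves all moduli and arguments relevant below. By Lemma~\ref{lem:6}, this component equals $\sum_{\ell=2}^s \lambda_\ell^n C_\ell^{(i,j)}(n)$, where each $C_\ell^{(i,j)}(n)$ is a polynomial in $n$. Group the eigenvalues by their moduli, let $L^*$ denote the largest modulus at which the combined contribution is not identically zero (if no such $L^*$ exists, the component is identically $0$ and we are done), and factor out $(L^*)^n$.

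Let $\theta_1,\ldots,\theta_J$ denote the distinct values of $\frac{1}{2\pi}\arg\lambda$ over the non-real eigenvalues $\lambda\in\sigma(P)^+$ with $|\lambda|=L^*$, and let $\epsilon\in\{0,+1,-1\}$ record whether the (unique, by Definition~\ref{def:4}) real eigenvalue of modulus $L^*$ is absent, equals $+L^*$, or equals $-L^*$. Then the dominant contribution takes the form
\[
(L^*)^n\Bigl[\epsilon^n Q_0(n)+\sum_{j=1}^{J}2\mathfrak{Re}\bigl(Q_j(n)e^{2\pi in\theta_j}\bigr)\Bigr],
\]
with $Q_0,\ldots,Q_J$ complex polynomials not all identically zero. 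Letting $k^*$ be the maximum degree appearing, this expression is asymptotic to $n^{k^*}(L^*)^n h(n\theta_1,\ldots,n\theta_J)$, multiplied by $(-1)^n$ if $\epsilon=-1$, where $h$ is a nontrivial real trigonometric polynomial in $J$ variables, hence real-analytic and nonvanishing outside a set of Lebesgue measure zero.

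The main step is then to invoke Lemma~\ref{lem:3} and show that
\[
n\log L^*+k^*\log n+\log\bigl|h(n\theta_1,\ldots,n\theta_J)\bigr|
\]
is u.d.\ mod $1$. Definition~\ref{def:4} is designed so that $\{1,\log L^*,\theta_1,\ldots,\theta_J\}$ is $\Qset$-independent, whence Weyl's theorem gives that $(n\log L^*,n\theta_1,\ldots,n\theta_J)$ is u.d.\ on the $(J+1)$-dimensional torus. Applying Weyl's criterion, for every nonzero integer $m$ the averages $\frac{1}{N}\sum_{n=1}^N e^{2\pi im(n\log L^*+\log|h(n\theta_1,\ldots,n\theta_J)|)}$ converge to
\[
\left(\int_0^1 e^{2\pi im y_0}\,dy_0\right)\cdot\int_{[0,1)^J}\bigl|h(y_1,\ldots,y_J)\bigr|^{2\pi im}\,dy_1\cdots dy_J=0,
\]
since the first factor vanishes, provided one verifies that the integrand is Riemann-integrable in spite of the logarithmic singularities of $\log|h|$ on the (measure-zero, analytic) zero set of $h$.

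The main obstacles are: (a) absorbing the slow drift $k^*\log n$, which is handled by an Abel-summation estimate showing that the Weyl sums remain $O(\log N/N)$; (b) the $(-1)^n$ factor when $\epsilon=-1$, which violates pure Weyl equidistribution because $\tfrac{1}{2}\in\Qset$ and is why that angle is explicitly excluded in Definition~\ref{def:4}---it is managed by splitting the sequence into even and odd subsequences and noting that $\{1,2\log L^*,2\theta_1,\ldots,2\theta_J\}$ inherits the requisite $\Qset$-independence; and (c) ensuring that $h$ is genuinely nontrivial and that the subdominant contributions from moduli $L<L^*$, of relative size $O\bigl((L/L^*)^n\bigr)$, do not disturb u.d.\ mod $1$ of the log-significand---a standard perturbation estimate for u.d.\ mod $1$ suffices. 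Assembling these pieces shows that each component of both $(P^n-P^*)$ and $(P^{n+1}-P^n)$ is either eventually zero or Benford, establishing the theorem.
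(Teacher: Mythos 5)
Your proposal follows essentially the same route as the paper: the Perron--Frobenius decomposition of Lemma~\ref{lem:6}, factoring out the dominant modulus $L^*$ and the polynomial growth $n^{k^*}$, and reducing Benford behavior via Lemma~\ref{lem:3} to uniform distribution mod $1$ of $n\log L^*+k^*\log n+\log\bigl|h(n\theta_1,\dots,n\theta_J)\bigr|$, which nonresonance delivers through Weyl's theorem on the torus; the only difference is that you re-derive by hand the content of the paper's Lemma~\ref{lem:1} (i.e.\ \cite[Lemma~2.9]{Berger05b}), including the treatment of the vanishing perturbation from subdominant eigenvalues near the zero set of $h$. The sketch is sound, though your assertion that the Weyl sums are $O(\log N/N)$ is neither needed nor true in general (equidistributed sequences can have arbitrarily slowly decaying Weyl sums); all that is used is that a slowly varying additive term $\alpha\log n$ preserves u.d.\ mod $1$, as in \cite[Lemma~2.8]{Berger05b}.
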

\noindent
The proof of Theorem \ref{thm:1} makes use of the following
\begin{lemma}
\label{lem:1}
Let $m\in \Nset$ and assume that $1,\rho_0,\rho_1,\ldots,\rho_m$ are $\Qset$-independent,
$(z_n)$ is a convergent sequence in $\Cset$, and at least one of the $2m$
numbers $c_1,\ldots,c_{2m}\in \Cset$ is non-zero. Then, for every $\alpha \in \Rset$,
the sequence
\begin{equation}\label{eq:xnlem1}
\bigl( n\rho_0 + \alpha \log n + \log |\xi_n|\bigr)
\end{equation}
is u.d.\ {\rm mod} $1$, where
$$
\xi_n := c_1e^{2\pi i n \rho_1} + c_2 e^{-2\pi i n \rho_1}+\ldots+
c_{2d-1}e^{2\pi i n \rho_m}+c_{2d}e^{-2\pi i n \rho_m}+z_n.
$$
\end{lemma}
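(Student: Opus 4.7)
\emph{Plan.} My strategy is to invoke Weyl's criterion, replace $\xi_n$ by a $\mathbb{T}^m$-periodic principal part, then control the resulting Weyl sum by Fourier analysis on $\mathbb{T}^m$ together with Abel summation.

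I fix a nonzero integer $h$ and aim to show $N^{-1}\sum_{n=1}^N e^{2\pi ihy_n}\to 0$ where $y_n$ denotes \eqref{eq:xnlem1}. Set $\zeta:=\lim_n z_n$ and define $\Xi:\mathbb{T}^m\to\Cset$ by
\[
\Xi(t_1,\ldots,t_m):=\zeta+\sum\nolimits_{k=1}^m\bigl(c_{2k-1}e^{2\pi i t_k}+c_{2k}e^{-2\pi i t_k}\bigr),
\]
so that $\xi_n=\Xi(\theta_n)+(z_n-\zeta)$ with $\theta_n:=(n\rho_1,\ldots,n\rho_m)\bmod 1$. Since at least one $c_j\neq 0$, the trigonometric polynomial $\Xi$ is non-constant on the torus; hence its zero set $Z$ is a proper real-analytic subvariety of $\mathbb{T}^m$ and has Lebesgue measure zero. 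By the $\Qset$-independence of $1,\rho_1,\ldots,\rho_m$ (which follows from the hypothesis) and Weyl's theorem, $(\theta_n)$ is u.d.\ on $\mathbb{T}^m$, so the upper density of $\{n:|\Xi(\theta_n)|<\epsilon\}$ is bounded by $\mathrm{Leb}\{|\Xi|<\epsilon\}$, which tends to $0$ as $\epsilon\to 0$. Combined with $z_n\to\zeta$, this permits me to replace $\log|\xi_n|$ by $\log|\Xi(\theta_n)|$ in the Weyl sum at a cost of $o(1)$.

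It then suffices to show, with $\gamma:=2\pi h\alpha/\ln 10$ and $\phi(\theta):=|\Xi(\theta)|^{2\pi ih/\ln 10}$ (set to $1$ on $Z$, consistent with $\log 0:=0$), that
\[
\lim_{N\to\infty}\frac{1}{N}\sum\nolimits_{n=1}^N e^{2\pi ihn\rho_0}\cdot n^{i\gamma}\cdot\phi(\theta_n)=0.
\]
The function $\phi$ has modulus one away from the measure-zero set $Z$, hence is bounded and continuous off $Z$; in particular $\phi$ is Riemann integrable on $\mathbb{T}^m$. Given $\delta>0$ I approximate $\phi$ in $L^2(\mathbb{T}^m)$ by a trigonometric polynomial $\phi_M(\theta)=\sum_{|\mathbf{k}|\le M}a_\mathbf{k}e^{2\pi i\mathbf{k}\cdot\theta}$ with $\|\phi-\phi_M\|_{L^2}<\delta$; Cauchy-Schwarz applied to the Riemann-integrable function $|\phi-\phi_M|^2$ together with Weyl-equidistribution of $(\theta_n)$ shows that the $\phi-\phi_M$ contribution is bounded by $\delta$ in $\limsup_N$. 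Expanding the $\phi_M$-piece reduces the question to showing, for each fixed $\mathbf{k}$, that $N^{-1}\sum_{n=1}^N n^{i\gamma}e^{2\pi in\mu_\mathbf{k}}\to 0$, where $\mu_\mathbf{k}:=h\rho_0+\mathbf{k}\cdot\rho$ and $\rho:=(\rho_1,\ldots,\rho_m)$. By the $\Qset$-independence of $1,\rho_0,\rho_1,\ldots,\rho_m$ and $h\neq 0$, $\mu_\mathbf{k}\notin\Zset$, so $|\sum_{n=1}^N e^{2\pi in\mu_\mathbf{k}}|$ is bounded uniformly in $N$. Abel summation against the weight $n^{i\gamma}$, whose increments satisfy $|(n+1)^{i\gamma}-n^{i\gamma}|=O(1/n)$, then yields a bound of $O(\log N)$ for $\sum_{n=1}^N n^{i\gamma}e^{2\pi in\mu_\mathbf{k}}$, so the inner Cesàro average is $O(N^{-1}\log N)\to 0$. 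Sending $N\to\infty$ and then $\delta\to 0$ completes the argument.

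The chief obstacle is the singularities of $\log|\Xi|$ on the analytic subvariety $Z\subset\mathbb{T}^m$: they rule out any pointwise-uniform approximation and force me to work with the bounded object $|\Xi|^{i\beta}$ (of modulus one wherever $\Xi\neq 0$) rather than $\log|\Xi|$ itself. The $L^2$-approximation combined with Cauchy-Schwarz, which leverages the $\mathbb{T}^m$-equidistribution of $(\theta_n)$, is what converts a Fourier-tail estimate into a vanishing empirical-average estimate. The slowly-varying $n^{i\gamma}$ arising from $\alpha\log n$ is absorbed painlessly at the end by Abel summation against the oscillatory $e^{2\pi in\mu_\mathbf{k}}$.
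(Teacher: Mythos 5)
Your argument is correct, and it is considerably more than the paper offers: the paper's entire proof of Lemma \ref{lem:1} is a one-line reduction to Lemma 2.9 of \cite{Berger05b}, so you have in effect supplied the self-contained argument that the paper outsources. The overall skeleton (Weyl's criterion for the frequency $h\neq 0$, equidistribution of $(n\rho_1,\ldots,n\rho_m)$ on $\mathbb{T}^m$ via the $\Qset$-independence hypothesis, and isolating the zero set of the limiting trigonometric polynomial $\Xi$) is the same strategy that underlies the cited lemma, but your two technical devices are worth highlighting as genuinely clean choices. First, passing from the unbounded $\log|\Xi|$ to the modulus-one function $\phi=|\Xi|^{2\pi ih/\ln 10}$ and then approximating $\phi$ in $L^2(\mathbb{T}^m)$ by a trigonometric polynomial, with Cauchy--Schwarz converting the $L^2$ error into a bound on the empirical average, neatly sidesteps the singularities on $Z$ without any quantitative control of how $\Xi$ degenerates near its zero set. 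Second, absorbing the $\alpha\log n$ term as the slowly varying weight $n^{i\gamma}$ and killing it by Abel summation against the bounded geometric sums $\sum_{n\le N}e^{2\pi in\mu_{\mathbf k}}$ (nonzero denominator because $h\rho_0+\mathbf{k}\cdot\rho\notin\Zset$ by $\Qset$-independence and $h\neq 0$) is exactly the right mechanism. The only point to tighten is cosmetic: equidistribution bounds the \emph{upper} density of visits to a \emph{closed} set by its measure, so you should bound the density of $\{n:|\Xi(\theta_n)|<\epsilon\}$ by $\mathrm{Leb}\{|\Xi|\le\epsilon\}$, which still tends to $\mathrm{Leb}(Z)=0$ as $\epsilon\downarrow 0$ since $|\Xi|^2$ is a nonzero real-analytic function; with that adjustment the replacement of $\log|\xi_n|$ by $\log|\Xi(\theta_n)|$ at cost $o(1)$ is fully justified.
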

\begin{proof}
Follows directly as in the proof of \cite[Lemma 2.9]{Berger05b}
which considers $\log|\mathfrak{Re}\xi_n|$ in \eqref{eq:xnlem1}.
\end{proof}
\begin{proof}[Proof of Theorem \ref{thm:1}]
By Lemma \ref{lem:6}(i), $\lim_{n \to \infty} P^n = P^*$ exists for the Markov
chain defined by $P$. Fix $(i,j) \in \{1,\ldots,d\}^2$. As the analysis of
$(P^{n+1} - P^n)^{(i,j)}$ is completely analogous, only $(P^n -P^*)^{(i,j)}$
will be considered here. If $(P^n -P^*)^{(i,j)}$ as given by \eqref{eq:7} is
not equal to zero for all but finitely many $n$, let $s_{i,j} \in \{1,\ldots,s\}$
be the minimal index such that $C_{s_{i,j}}^{(i,j)} \neq 0$. As in
\cite[p.224]{Berger05b}, to analyze \eqref{eq:7}, distinguish two cases.
\par\bigskip\noindent
\underline{Case 1:} $|\lambda_{s_{i,j}}| > |\lambda_{s_{i,j}+1}|$.
\par
\noindent
In this case $\lambda_{s_{i,j}}$ is a \emph{dominant}
eigenvalue, and it is real since otherwise
its conjugate would be an eigenvalue with the same modulus.
Equation \eqref{eq:7} can be written as
\begin{align*}
(P^n -  P^*)^{(i,j)}  & =  \sum\nolimits_{\ell=s_{i,j}}^d\lambda_\ell^n C_\ell^{(i,j)}
= |\lambda_{s_{i,j}}|^n\, n^{k_{s_{i,j}}^{(i,j)}} \sum\nolimits_{\ell=s_{i,j}}^d
\left(\frac{\lambda_{\ell}}{|\lambda_{s_{i,j}}|} \right)^n
\frac{C_\ell^{(i,j)}}{n^{k_{s_{i,j}}^{(i,j)}}}
\\
& =  |\lambda_{s_{i,j}}|^n\, n^{k_{s_{i,j}}^{(i,j)}} \left(
c_{s_{i,j}}^{(i,j)}
\left(\frac{\lambda_{s_{i,j}}}{|\lambda_{s_{i,j}}|} \right)^n +
\zeta_{i,j}(n)\right) \, ,
\end{align*}
where
\[ c_{s_{i,j}}^{(i,j)} := \lim\nolimits_{n\to\infty}
n^{-k_{s_{i,j}}^{(i,j)}} C_{s_{i,j}}^{(i,j)}\neq 0\, ,
\]
and $\zeta_{i,j}(n)\to 0$ as $n \to \infty$ because $\lambda_{s_{i,j}}$
is a dominating eigenvalue. Therefore,
$$
\log \big|(P^n - P^*)^{(i,j)}\big| = n \log |\lambda_{s_{i,j}}| +
k_{s_{i,j}}^{(i,j)} \log n + \log |c_{s_{i,j}}^{(i,j)}|
+\eta_n \, ,
$$
with $\eta_n = \log \left| 1 + \zeta_{i,j}(n) e^{-i n \arg \lambda_{s_{i,j}}}/ c_{s_{i,j}}^{(i,j)}\right|$.
Since $\eta_n \to 0$ and $\log |\lambda_{s_{i,j}}|$ is irrational, the sequence
$(P^n - P^*)^{(i,j)}$ is Benford by Lemma \ref{lem:3} and the fact that $(x_n + \alpha \log n + \beta)$
is u.d.\ mod $1$ whenever $(x_n)$ is (e.g.\ \cite[Lem.\ 2.8]{Berger05b}).
\par\bigskip\noindent
\underline{Case 2:} $|\lambda_{s_{i,j}}| = |\lambda_{s_{i,j}+1}| = \ldots =
|\lambda_{t_{i,j}}| =: |\lambda_{i,j}|$ for some $t_{i,j} >
s_{i,j}$.
\par
\noindent
Here several \emph{different} eigenvalues of the same magnitude
occur, such as e.g.\ conjugate
pairs of non-real eigenvalues. Let $k^{(i,j)}$ be the maximal
degree of the polynomials $C_{\ell}^{(i,j)}$, $\ell = s_{i,j}, \ldots, t_{i,j}$.
As in Case $1$, express \eqref{eq:7} as
\[
(P^n - P^*)^{(i,j)} = |\lambda_{i,j}|^n  n^{k^{(i,j)}} \left(
c_{s_{i,j}}^{(i,j)} \left(
\frac{\lambda_{s_{i,j}}}{|\lambda_{s_{i,j}}|} \right)^n + \ldots +
c_{t_{i,j}}^{(i,j)} \left(
\frac{\lambda_{t_{i,j}}}{|\lambda_{t_{i,j}}|} \right)^n +
\zeta_{i,j}(n)\right)\, ,
\]
where $c_{\ell}^{(i,j)} := \lim_{n \rightarrow \infty}
n^{-k^{(i,j)}} C_{\ell}^{(i,j)} \in \Cset$ for
$\ell=s_{i,j},\ldots,t_{i,j}$, with $c_{\ell}^{(i,j)} \neq 0$ for at
least one $\ell$, and $\zeta_{i,j}(n) \to 0$ as $n \to
\infty$. Consequently,
\begin{align*}
\log  \big|(P^n - P^*)^{(i,j)}\big|  = n & \log
|\lambda_{i,j}| + k^{(i,j)} \log n \\
& + \log \left| c_{s_{i,j}}^{(i,j)} \left(
\frac{\lambda_{s_{i,j}}}{|\lambda_{s_{i,j}}|}\right)^n + \ldots +
c_{t_{i,j}}^{(i,j)} \left(
\frac{\lambda_{t_{i,j}}}{|\lambda_{t_{i,j}}|}\right)^n +
\zeta_{i,j}(n) \right|\, .
\end{align*}
Write
$\lambda_{\ell}$ as $\lambda_{\ell} = |\lambda_{i,j}|e^{i
\arg \lambda_{\ell}}$ for $\ell = s_{i,j}, \ldots, t_{i,j}$, and hence
\begin{equation*}%
\begin{split}
\log \big|(P^n - P^*)^{(i,j)}\big|  =
n & \log |\lambda_{i,j}| + k^{(i,j)} \log n \\
& + \log \left| c_{s_{i,j}}^{(i,j)} e^{i n \arg \lambda_{s_{i,j}}} +
\ldots + c_{t_{i,j}}^{(i,j)} e^{i n \arg \lambda_{t_{i,j}}} +
\zeta_{i,j}(n) \right|\, .
\end{split}
\end{equation*}
Since $P$ is nonresonant, Lemma \ref{lem:1} applies with $m=t_{i,j} - s_{i,j}+1$ and
$\rho_0 = \log|\lambda_{i,j}|$, $\rho_1 = \frac1{2\pi} \arg \lambda_{s_{i,j}}$, \dots,
$\rho_m = \frac1{2\pi} \arg \lambda_{t_{i,j}}$. Thus $(P^n - P^*)^{(i,j)}$ is Benford.
\end{proof}

\begin{exam1} \label{ex:7}
(The general two-dimensional case)\\
Let $d=2$ and $ P = \begin{bmatrix}
       1-x & x \\
       y   & 1-y
\end{bmatrix}
$ with $x,y \in (0,1)$. By Feller \cite[p. $432$]{Feller50},
\begin{equation}\label{eq:fel}
P^n = \frac{1}{x+y}
\left[ \begin{array}{cc}
               y & x \\
               y & x
\end{array} \right] + \frac{(1-x-y)^n}{x+y}
\left[ \begin{array}{rr}
              x & -x \\
             -y & y
\end{array}\right] \, ,
\end{equation}
from which it is clear that $\lambda_1 =1$, $\lambda_2 = 1-x-y$, and
$
P^* = {\displaystyle \frac{1}{x+y}}
\left[ \begin{array}{cc}
              y & x \\
              y & x
\end{array} \right]
$. It follows from (\ref{eq:fel}) that each component of $(P^n - P^*)$ and
$(P^{n+1}-P^{n})$ is a multiple of $(\lambda_2^n)$. By Theorem~\ref{thm:1},
the Markov chain with transition probability matrix $P$ is Benford whenever
$\log |1-x-y|$ is irrational. On the other hand, by Lemma \ref{lem:5} $P$ is
not Benford if $\log |1-x-y|\in \Qset$. Thus for $d=2$, nonresonance is (not
only sufficient but also) necessary for $P$ to be Benford. For $d\ge 3$, this
is no longer true, see Example \ref{ex:15} below.
\end{exam1}

\begin{exam1}\label{ex:8}
(The general three-dimensional case)\\
Let $d=3$ and $ P= \left[ \begin{array}{ccc}
           x_1  &  x_2 & 1-x_1-x_2 \\
           y_1  &  y_2 & 1-y_1-y_2 \\
           z_1  &  z_2 & 1-z_1-z_2
           \end{array}\right]
$,
where $x_1,x_2,y_1,y_2,z_1,z_2 \in (0,1)$ are such that $x_1+x_2, y_1+y_2, z_1 +z_2$
all lie between $0$ and $1$. Solving the characteristic equation yields the
eigenvalues $\lambda_1 = 1$ and $\lambda_{2,3} = a\pm\sqrt{a^2-b}$, with
$$
a ={\textstyle \frac{1}{2}}(x_1+y_2-z_1-z_2) \quad \mbox{and} \quad
b = x_1y_2 - x_1z_2 + y_1z_2 - x_2y_1  + x_2z_1 - y_2z_1\, .
$$
Furthermore, using
$$
c = 1-y_2+z_1-y_2z_1 + x_2(-y_1+z_1)+x_1(-1+y_2-z_2)+z_2+y_1z_2 \ne 0\, ,
$$
one finds that
\[
P^* = \frac{1}{c}\left[ \begin{array}{c c c}
    z_1-y_2z_1+y_1z_2 &
    x_2 z_1 + z_2 - x_1 z_2 &
    1 - x_1 - x_2 y_1 - y_2 + x_1 y_2 \\
    z_1-y_2z_1+y_1z_2 &
    x_2 z_1 + z_2 - x_1 z_2 &
    1 - x_1 - x_2 y_1 - y_2 + x_1 y_2 \\
    z_1-y_2z_1+y_1z_2 &
    x_2 z_1 + z_2 - x_1 z_2 &
    1 - x_1 - x_2 y_1 - y_2 + x_1 y_2 \\
    \end{array}\right] \, .
\]
If $a^2\ne b$, then $ P^n-P^* = \lambda_2^n B_2 + \lambda_3^n B_3$,
where $B_{\ell}$ for $\ell=2,3$ are as in (\ref{eq:specdecom}).
There are two cases to consider:
\begin{enumerate}
\item \underline{$a^2>b$.}\\
In this case, $\lambda_{2,3}$ are real, and the dominant eigenvalue must be
identified. If $a >0$, then $|\lambda_2| > |\lambda_3|$, hence $\lambda_2$ is
dominant. If $B_2^{(i,j)} \neq 0$ for all $(i,j)\in \{1,2,3\}^2$, then the
Markov chain defined by $P$ is Benford if $\log |\lambda_2|$ is irrational.
In case there also exists $(i,j)$ with $B_2^{(i,j)} = 0$ yet $B_3^{(i,j)}\ne 0$,
then for $P$ to be Benford $\log |\lambda_3|$ has to be irrational as well.
For $a<0$ the roles of $\lambda_2$ and $\lambda_3$ have to be interchanged.
If $a=0$, then $P$ is resonant but may still be Benford, see Example \ref{ex:15}(ii).

\item \underline{$a^2<b$.}\\
Here $\lambda_{2,3}$ are conjugate and non-real, with
$|\lambda_2|=|\lambda_3|=\sqrt{b}$. Thus $P$ is nonresonant if and only if
the numbers $1, \frac12 \log b, \frac1{2\pi} \arctan \sqrt{b/a^2 - 1}$ are
$\Qset$-independent.
\end{enumerate}

\noindent
Finally, if $a^2 = b$ then $\lambda_2 = \lambda_3 = a$, so $P$ is Benford
whenever $\log|a|$ is irrational.
\end{exam1}
\par
\noindent
The next example shows that for a Markov chain to be Benford, nonresonance
is not necessary in general.
\begin{exam1}\label{ex:15}
(Markov chains that are resonant yet Benford)
\begin{enumerate}[(i)]
\item
Eigenvalues with rational argument: Let $d=3$ and
$
P = \left[ \begin{array}{ccc}
0.4 & 0.5 & 0.1\\
0.4 & 0.3 & 0.3 \\
0.6 & 0.1 & 0.3
\end{array} \right]
$. The eigenvalues are $\lambda_1 = 1$ and $\lambda_{2,3} = \pm 0.2i$.
With $\Lambda_0=\{0.2i\}$ therefore $\frac1{2\pi} \arg \Lambda_0 =\{\frac14\} \subset \Qset$,
so $P$ is resonant. However, spectral decomposition shows that
$B_3=\overline{B_2}$, i.e., $B_2, B_3$ are conjugates, and each component
of $B_2$ has non-zero real {\em and\/} imaginary part. Thus for every
$(i,j)\in \{1,2,3\}^2$,
\[
\big|(P^n-P^*)^{(i,j)}\big| =
\big|2\mathfrak{Re} (0.2i)^n B_2^{(i,j)}\big| = \left\{
\begin{array}{lll}
2\cdot 0.2^n \big|\mathfrak{Re} B_2^{(i,j)}\big| & & \mbox{if } n \mbox{ is even},\\[2mm]
2\cdot 0.2^n \big|\mathfrak{Im} B_2^{(i,j)}\big| & & \mbox{if } n \mbox{ is odd},\\
\end{array}
\right.
\]
and $(P^n - P^*)^{(i,j)}$ is Benford.

\item
Two real eigenvalues of opposite sign: Let $d=3$ and
$P = \left[ \begin{array}{ccc}
0.4 & 0.5 & 0.1\\
0.7 & 0.2 & 0.1 \\
0.4 & 0.2 & 0.4
\end{array}\right]
$. The eigenvalues are $\lambda_1 = 1$ and $\lambda_{2,3} = \pm 0.3$.
It can be checked that each component of $B_2 \pm B_3$ is non-zero.
Thus for every $(i,j)\in \{1,2,3\}^2$,
\[
(P^n-P^*)^{(i,j)}= 0.3^n\left(B_2^{(i,j)}+(-1)^n B_3^{(i,j)}\right),
\]
which is Benford because $\log 0.3\not\in\Qset$.
\end{enumerate}
\end{exam1}
\vspace{3 mm}

\noindent \textbf{Remarks on general Markov chains:}

\medskip

(i) Theorem \ref{thm:1} can not be applied to Markov chains
that fail to be irreducible. However, every finite-state Markov
chain can be decomposed into classes of recurrent and transient
states. Hence, the transition matrix $P$ can be block-partitioned
as
\[
P = \left[ \begin{array}{ccccc}
           P_1     &  0           &  \cdots &  0        &    0      \\
           0       &  P_2         &  0      &  0        &    0      \\
       \vdots      &              &  \ddots &           &    \vdots \\
           0       &  0           &         &  P_r      &    0      \\
           B^{(1)} & B^{(2)}      &  \cdots & B^{(r)}   &    A      \\
           \end{array}\right] ,
\]
where $P_1, P_2,\ldots, P_r$ are the transition matrices of the $r$
disjoint recurrent classes, and $B^{(1)}, B^{(2)},\ldots, B^{(r)}$ denote
the transition probabilities from the collection of transient states
into each recurrent class. As $n\to \infty$,
\[
P^n   =  \!  \left[ \begin{array}{ccccc}
           P_1^n        &  0        & \cdots  & 0        &    0      \\
           0            &  P_2^n    & 0       & 0        &    0      \\
          \vdots        &           & \ddots  &          &    \vdots \\
       0            & 0         &         & P_r^n    &    0      \\
           L_n^{(1)}    & L_n^{(2)} & \cdots  & L_n^{(r)}&    A^n    \\
           \end{array}\right]  \to
           \left[ \begin{array}{ccccc}
           P_1^*     &  0     & \cdots  & 0     &    0      \\
           0         &  P_2^* & 0       & 0     &    0      \\
           \vdots    &        & \ddots  &       &    \vdots \\
       0         & 0      &         & P_r^* &    0      \\
       SB^{(1)}P_1^*     & SB^{(2)}P_2^*  & \cdots  & SB^{(r)}P_r^* &    0 \\
           \end{array}\right] ,
\]
where $L_n^{(j)} = \sum_{\ell =0}^{n-1} A^{\ell} B^{(j)}P_j^{n - \ell - 1}$
for $j=1,2,\ldots, r$, and $S=\sum_{k=0}^{\infty} A^k$. Theorem \ref{thm:1}
can be applied separately to the transition matrices $P_j$ associated with
the recurrent classes. Consequently, if $P_1,P_2,\ldots, P_r$ are
Benford, then the corresponding components of $P$ are also Benford.
Additionally, if $A$ is nonresonant, then that part follows BL
as well. The only remaining parts are formed by the sequences
$\bigl(L_n^{(j)}\bigr)$ and depend on the (nonautonomous) summation of the powers of $A$.
Their Benford properties are beyond the scope of this paper.

\medskip

(ii) For an irreducible Markov chain that is not aperiodic, but rather
periodic with period $p>1$, Definition \ref{def:2} still makes sense, provided
that $P^*$ is understood as the unique row-stochastic matrix with $P^* P = P^*$.
However, such a chain cannot be Benford since for every $(i,j)\in \{1,\ldots , d\}^2$
one can choose $k\in \{0, \ldots , p-1\}$ such that
$$
|(P^n - P^*)^{(i,j)}| = (P^*)^{(i,j)} > 0 \, , \quad
\forall n \in \Nset \backslash (k + p\Nset) \, .
$$
Similarly, each component of $(P^{n+1} - P^n)$ equals zero at least
$(p-2)/p$ of the time and thus cannot be Benford either whenever
$p\ge 3$. The distribution of significands of $(P^{n+1} - P^n)^{(i,j)}$
observed in this situation is a convex combination of BL and a pure point
mass, see \cite[Cor.\ 6]{Berger08}. Only in the case $p=2$ is it possible
for each component of $(P^{n+1} - P^n)$ to be either Benford or eventually
zero.

\medskip

(iii) Although this paper deals with finite-state Markov chains only,
it is worth noting that chains with {\em infinitely\/} many states may also obey
BL in one way or the other. For a very simple example, let $0<\rho<1$ and
consider the homogeneous random walk on $\Zset$ with
$$
P^{(i,j)} = \left\{
\begin{array}{lll}
\rho^2 & & \mbox{if } j=i-1 \, , \\
2\rho(1-\rho) & & \mbox{if } j=i \, , \\
(1-\rho)^2 & & \mbox{if } j=i+1 \, , \\
0 & & \mbox{otherwise}\, .
\end{array}
\right.
$$
Clearly, this Markov chain is irreducible and aperiodic. It is (null-)recurrent
if $\rho=\frac12$, and transient otherwise. For all $(i,j)\in \Zset^2$ and $n\in \Nset$,
$$
(P^n)^{(i,j)} = \left(
\begin{array}{c}
2n \\ n + i - j
\end{array}
\right) \rho^{n+i - j} (1-\rho)^{n-i + j} \, ,
$$
and an application of Stirling's formula shows that $(P^n)^{(i,j)}$
is Benford if and only if $\log \bigl( 4 \rho (1-\rho)\bigr)$ is
irrational. For all but countably many $\rho$, therefore,
$(P^n)^{(i,j)}$ is Benford for every $(i,j)$. Note that one of the
excluded values is $\rho=\frac12$, i.e.\ the recurrent case. For {\em
recurrent\/} chains virtually every imaginable behavior of
significant digits or significands can be manufactured by means of
advanced ergodic theory tools, see \cite{Berger10} and the
references therein.


\section{Almost all Markov chains are Benford}\label{s:thm2}

The second main theoretical objective of this paper is to show that
Benford behavior is typical in finite-state Markov chains. Indeed,
if the transition probabilities of the chain are chosen at random,
independently and in any continuous manner, then the chain
almost always, i.e.\ with probability one, obeys BL. To formulate
this more precisely, the following terminology will be used.

\begin{defin}
\label{def:5} A \emph{random} ({\em $d$-state})
\emph{Markov chain} is a random $d \times d$-matrix
$\bm{P}$, defined on some probability space $(\Omega, \mathcal{F},
\mathbb{P})$ and taking values in $\mathcal{P}_d$, i.e., each row
$\bm{X}_1,\ldots,\bm{X}_d$ of $\bm{P}$ is a random vector taking
values in the standard $d$-simplex
\[
\Delta_d := \left\{(x_1,\ldots,x_d) \in \mathbb{R}^d : x_j \geq 0 \mbox{
 for all  } 1 \leq j \leq d, \mbox{ and } \sum\nolimits_{j=1}^d x_j = 1
 \right\}.
\]
A random vector $\bm{X}: \Omega \to \Delta_d$ is {\em continuous\/} if its
distribution on $\Delta_d$ is continuous w.r.t.\ the (normalised) Lebesgue
measure on $\Delta_d$, that is, if $\mathbb{P}(\bm{X}\in A)=0$ whenever
$A\subset \Delta_d$ is a nullset.
\end{defin}

\noindent
With this terminology, it is the purpose of the present section to illustrate
and prove

\begin{theorem}
\label{thm:2} If the transition probabilities (i.e.\ the rows) of a random
Markov chain $\bm{P}$ are independent and continuous, then $\bm{P}$ is
Benford with probability one.
\end{theorem}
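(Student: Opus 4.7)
The strategy is to apply Theorem~\ref{thm:1}: if I can show that with probability one $\bm{P}$ is irreducible, aperiodic, and nonresonant, then with probability one $\bm{P}$ is Benford. I would proceed in two stages --- first an easy positivity argument for irreducibility and aperiodicity, then a measure-theoretic treatment of the resonance conditions.

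For the positivity step, the set of probability vectors in $\Delta_d$ having at least one zero coordinate lies in the union of the proper faces of $\Delta_d$ and hence is a Lebesgue nullset on $\Delta_d$. Since each row $\bm{X}_i$ of $\bm{P}$ has a continuous distribution, $\PP(\bm{X}_i$ has a zero entry$)=0$, and a union bound over the $d$ independent rows yields $\PP(\bm{P}$ has any zero entry$)=0$. Hence $\bm{P}$ is almost surely a positive matrix, in particular irreducible and aperiodic.

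For nonresonance, parametrize $\mathcal{P}_d$ by the $d(d-1)$ independent row entries, so that the law of $\bm{P}$ is absolutely continuous with respect to Lebesgue measure on this parameter space. The coefficients of the characteristic polynomial $\psi_P$ are polynomials in these coordinates, and off the discriminant variety (a proper real-algebraic subvariety, hence a nullset) the non-leading eigenvalues $\lambda_2(P),\ldots,\lambda_d(P)$ depend locally real-analytically on $P$; the same is true for their moduli and, on regions avoiding the negative real axis, for their arguments. I would then decompose the resonant set $R\subset\mathcal{P}_d$ as a countable union of events indexed by (i) a structural choice of $\Lambda_0\subset\sigma(P)^+\setminus\{1\}$ with equimodular elements (finitely many combinatorial types, since $|\sigma(P)|\leq d$), and (ii) a nonzero integer tuple $(n_0,n_1,m_1,\ldots,m_k)$ witnessing the rational dependence in Definition~\ref{def:4} (countably many). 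Each such event is, up to the vanishing of a resultant polynomial in the $\#(\Lambda_0\cap\Rset)\geq 2$ configurations (cf.\ Example~\ref{ex:6}(i)), the zero set of a real-analytic function on an open subset of $\mathcal{P}_d$ of the form
\[
F(P) = n_0 + n_1 \log L_0(P) + \sum\nolimits_{j=1}^k m_j\cdot\tfrac{1}{2\pi}\arg\lambda_{i_j}(P).
\]
Since zero sets of nonzero real-analytic functions have Lebesgue measure zero, a countable union of such events is still null; absolute continuity of the law of $\bm{P}$ then gives $\PP(\bm{P}\in R)=0$.

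The main obstacle is verifying that each $F$ above is genuinely not identically zero on its domain --- a nontrivial point because $\log$ and $\arg$ are transcendental, so purely algebraic arguments do not suffice. I would dispatch this by a local submersion argument: at any $P_0\in\mathcal{P}_d$ with simple spectrum, the implicit function theorem applied to $\psi_P$ shows that the spectral map $P\mapsto(\lambda_2(P),\ldots,\lambda_d(P))$ carries an open neighborhood of $P_0$ onto an open neighborhood of $(\lambda_2(P_0),\ldots,\lambda_d(P_0))$ in the space of conjugation-symmetric $(d-1)$-multisets; on such a neighborhood no fixed nonzero $\Qset$-linear combination of $1$, $\log|\lambda_\ell|$, and $\tfrac{1}{2\pi}\arg\lambda_\ell$ can vanish identically, because as $\lambda_\ell$ ranges over an open set in $\Cset$ the pair $(\log|\lambda_\ell|,\arg\lambda_\ell)$ sweeps out an open subset of $\Rset\times(-\pi,\pi)$. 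Taking $P_0$ to be a concrete interior matrix with simple spectrum, for example the one in Example~\ref{ex:2}(ii), supplies the required witness. Combining this nonresonance conclusion with the positivity step and Theorem~\ref{thm:1} completes the proof.
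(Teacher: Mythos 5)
Your overall skeleton is the same as the paper's: show that with probability one $\bm{P}$ is irreducible, aperiodic, invertible with simple spectrum, and nonresonant, then invoke Theorem~\ref{thm:1}; the nullset arguments rest, as in the paper, on absolute continuity of the law of $\bm{P}$ together with the fact that the zero set of a real-analytic function that does not vanish identically is a Lebesgue nullset (the paper runs this via Fubini row by row). Your positivity step, your handling of the discriminant locus, and your reduction of the condition $\#(\Lambda_0\cap\Rset)\le 1$ to the vanishing of a resultant-type polynomial are all fine, and your function $F(P)=n_0+n_1\log L_0(P)+\sum_j m_j\,\tfrac{1}{2\pi}\arg\lambda_{i_j}(P)$ is the right object to control, since it directly encodes the rational-independence requirement of Definition~\ref{def:4}.

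The gap is precisely at the step you flag as the main obstacle. You assert that ``the implicit function theorem applied to $\psi_P$ shows that the spectral map $P\mapsto(\lambda_2(P),\ldots,\lambda_d(P))$ carries an open neighborhood of $P_0$ onto an open neighborhood'' of the spectrum. The implicit function theorem gives only the other direction: simple roots depend real-analytically on the coefficients of $\psi_P$, hence on $P$ (this is the content of Lemma~\ref{lem:7}). It says nothing about the map $P\mapsto\bigl(q_1(P),\ldots,q_{d-1}(P)\bigr)$ being a submersion, i.e.\ about its differential having full rank $d-1$ on the $d(d-1)$-dimensional tangent space of $\mathcal{P}_d$ --- and that rank condition is exactly what local openness of the spectral map means. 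You neither compute this rank nor reduce it to a checkable witness; verifying that the matrix of Example~\ref{ex:2}(ii) has simple spectrum does not verify that the spectral map is a submersion there. Without openness, $F$ could a priori vanish identically on a ball $B_\varepsilon(P_0)$ even though no fixed nonzero $\Qset$-linear relation holds on any open subset of $\Cset$, and the entire nonresonance step collapses. The paper sidesteps this by a different device: it perturbs along the explicit curve $\delta\mapsto(1-\delta)P+\delta I_{d\times d}$, which stays in $\mathcal{P}_d$ and moves each eigenvalue along $\lambda\mapsto(1-\delta)\lambda+\delta$, and checks by direct computation that its auxiliary functions $\Phi_{\rho,i}$ and $\Theta_{i,j}$ become strictly positive along that curve. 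To repair your version you would need either to exhibit one stochastic matrix at which the coefficient map is a submersion (the non-submersion locus is then a proper algebraic subvariety, so every ball contains submersion points and the image of the spectral map restricted to any ball has nonempty interior), or to show that your general $F$ becomes nonzero along some explicit curve inside each ball, in the spirit of the paper's computation.
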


Before giving a full proof for Theorem \ref{thm:2}, the special case of
a random two-state chain will be examined to show how independence and
continuity together allow the application of Theorem \ref{thm:1}. The
case $d=2$ is especially transparent since the eigenvalue functions are
simple and explicit, unlike for the general case where the eigenvalues
are only known implicitly, and the Implicit Function Theorem has to be
resorted to.

\begin{exam1}\label{ex:9}
Consider the random two-state Markov chain
\[
\bm{P}= \begin{bmatrix}
           1-\bm{X}  &  \bm{X} \\
           \bm{Y}    &  1-\bm{Y}
           \end{bmatrix},
\]
where the random variables $\bm{X}$ and $\bm{Y}$ are i.i.d.\ (absolutely)
continuous random variables on the unit interval $[0,1]$.
Since $\bm{X}$ and $\bm{Y}$ are continuous, each of the four entries
of $\bm{P}$ is strictly positive with probability one, so the chain
is irreducible and aperiodic with probability one. Since $\bm{P}$ is
random, the second-largest eigenvalue is a random variable $\bm{Z}$
which, by Example~\ref{ex:7}, satisfies $\bm{Z} = 1-\bm{X}-\bm{Y}$.
Since $\bm{X}$ and $\bm{Y}$ are independent and continuous, $\bm{Z}$
is also continuous, and hence the probability that
$\bm{Z}$ is in any given countable set is zero. But this implies
that the probability of $\log |\bm{Z}|$ being rational is zero, which
in turn shows that with probability one, $\bm{P}$ is nonresonant, and
hence Benford, by Theorem \ref{thm:1}.
\end{exam1}

Similarly to the analysis of Newton's method in \cite{Berger07},
a key property in the present Markov chain setting is the
\emph{real-analyticity} of certain functions, notably the
eigenvalue functions. Recall that a function $f: U \to \Cset$
is {\em real-analytic\/} whenever it can, in the neighborhood
of every point in its domain $U$ (an open subset of
$\Rset^\ell$ for some $\ell \ge 1$), be written as a convergent
power series. Clearly, every real-analytic function is $C^{\infty}$,
i.e.\ has derivatives of all orders. An important property of
real-analytic functions not shared by arbitrary $\Cset$-valued
$C^{\infty}$-functions defined on $U$ is that the set $\{x\in U :f(x) =0\}$
is a nullset unless $f$ vanishes identically on $U$.

The proof of Theorem \ref{thm:2} will be based on several preliminary
results. First, given $a=(a_1,\ldots,a_d) \in \mathbb{C}^d$, let
$p_a : \mathbb{C} \rightarrow \mathbb{C}$ denote the polynomial
\[
p_a(z) = z^d + a_1 z^{d-1} +\ldots+ a_{d-1}z + a_d\,  .
\]
By the Fundamental Theorem of Algebra, $p_a$ has exactly $d$ zeroes
(counted with multiplicities). If $p_a$ and $p_a'$, or more
generally, if $p_a$ and $p_b$ with $a \neq b$ have a common zero
then a universal polynomial relation must necessarily be satisfied
by $a$ and $b$. Only a special case of this elementary fact is
required here, and since no reference is known to the authors, a
proof is included for completeness.

\begin{lemma}
\label{prop:1} For every integer $d > 1$, there exists a non-trivial
polynomial $Q_d$ in $2d-1$ variables with the following property:
Whenever $a=(a_1,\ldots,a_d) \in \mathbb{C}^d$, $b=(b_1,\ldots,b_{d-1})
\in \mathbb{C}^{d-1}$, and $p_a(z_0) = p_b(z_0)=0$ for some $z_0 \in
\mathbb{C}$, then $Q_d(a,b) := Q_d(a_1,\ldots,a_d,b_1,\ldots,b_{d-1})=0$.
\end{lemma}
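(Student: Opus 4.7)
The plan is to take $Q_d$ to be the \emph{Sylvester resultant} of $p_a$ and $p_b$ in the variable $z$. Concretely, form the $(2d-1)\times(2d-1)$ Sylvester matrix $S(a,b)$ whose first $d-1$ rows are shifted copies of the coefficient vector $(1,a_1,\ldots,a_d)$ of $p_a$ and whose remaining $d$ rows are shifted copies of the coefficient vector $(1,b_1,\ldots,b_{d-1})$ of $p_b$ (in each case padded with zeros so that the shifts fit into $2d-1$ columns). The Leibniz expansion of $\det S(a,b)$ is a polynomial expression in the $2d-1$ entries $a_1,\ldots,a_d,b_1,\ldots,b_{d-1}$, so setting $Q_d(a,b):=\det S(a,b)$ gives a polynomial of the required shape.

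Next I would verify the vanishing property via the standard kernel interpretation. In the basis $1,z,z^2,\ldots,z^{2d-2}$ of the space of polynomials of degree at most $2d-2$, the rows of $S(a,b)$ are exactly the coordinate vectors of the $2d-1$ polynomials $z^{d-2}p_a(z),\ldots,zp_a(z),p_a(z),z^{d-1}p_b(z),\ldots,zp_b(z),p_b(z)$. Hence if $z_0\in\Cset$ is a common root of $p_a$ and $p_b$, every one of these polynomials vanishes at $z_0$, which says exactly that the nonzero column vector $v(z_0):=(z_0^{2d-2},z_0^{2d-3},\ldots,z_0,1)^\top$ lies in the kernel of $S(a,b)$. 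Therefore $S(a,b)$ is singular and $Q_d(a,b)=0$.

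Finally, to establish non-triviality it suffices to exhibit one pair $(a^\ast,b^\ast)$ with $Q_d(a^\ast,b^\ast)\ne 0$. I would choose $p_{a^\ast}(z)=z^d$ and $p_{b^\ast}(z)=z^{d-1}-1$, i.e.\ $a^\ast=0$ and $b^\ast=(0,\ldots,0,-1)$. A short elementary-row-operation argument reduces $S(a^\ast,b^\ast)$ to a diagonal matrix whose diagonal consists of $d-1$ copies of $1$ followed by $d$ copies of $-1$, so $Q_d(a^\ast,b^\ast)=(-1)^d\ne 0$. This can also be read off conceptually from the fact that $p_{a^\ast}$ has only $0$ as a root while $p_{b^\ast}(0)=-1$, so $p_{a^\ast}$ and $p_{b^\ast}$ share no root and hence $S(a^\ast,b^\ast)$ is non-singular by the kernel argument above applied in reverse (no common root allows any singular combination).

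There is no genuinely hard step; the one place to be careful is the bookkeeping in the Sylvester matrix (the indexing of rows, the zero-padding, and the identification of rows with shifted polynomials). Once that is set up cleanly, both the vanishing and non-triviality steps are transparent, and they establish the claim with $Q_d:=\det S(\cdot,\cdot)$.
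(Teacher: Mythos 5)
Your proof is correct, and it takes a genuinely different route from the paper. The paper builds $Q_d$ by induction on $d$: the base case $d=2$ is written down explicitly, and the inductive step performs one step of polynomial division (replacing $p_a$ by $p_a - \bigl(z+(a_1-b_1)\bigr)p_b$, whose leading coefficient is $\rho$, and then clearing the denominator $\rho$ by multiplying through by $\rho^{1+\deg Q_d}$) --- essentially a hand-rolled Euclidean algorithm, elementary and self-contained but yielding a rather opaque polynomial whose non-vanishing must be tracked through the induction. You instead take $Q_d$ to be the Sylvester resultant $\det S(a,b)$, which is a single explicit determinant; the row and column counts you give ($d-1$ shifted copies of the coefficients of $p_a$, $d$ of $p_b$, in $2d-1$ columns) are right, the kernel argument with $v(z_0)=(z_0^{2d-2},\ldots,z_0,1)^\top$ correctly establishes the vanishing property, and your test pair $p_{a^\ast}(z)=z^d$, $p_{b^\ast}(z)=z^{d-1}-1$ does give $\det S(a^\ast,b^\ast)=(-1)^d\neq 0$ (the row reduction you describe checks out, and the coprimality argument is the standard one: a linear dependence among the rows would produce $u\,p_a+v\,p_b=0$ with $\deg v<\deg p_a$, forcing $u=v=0$ when $p_a,p_b$ are coprime). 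What your approach buys is a classical, referenceable construction with explicit degree bounds, and in fact the stronger biconditional --- since both polynomials are monic of the stated degrees, $\det S(a,b)=0$ if and only if $p_a$ and $p_b$ share a root --- which makes non-triviality automatic; what the paper's induction buys is avoidance of any determinant machinery, at the cost of transparency.
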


\begin{proof}
For $d=2$, let $Q_2(a,b): = a_1b_1 - a_2 - b_1^2$ for all
$a=(a_1,a_2) \in
\mathbb{C}^2$ and  $b=b_1 \in \mathbb{C}$.
To see that $Q_2$ has the desired property, note that if $p_a(z_0) =
0=p_b(z_0)$, then $z_0^2 + a_1z_0 + a_2 = 0$ and $z_0 = -b_1$, hence
$Q_2(a,b)=0$. Assume now that $Q_d$ has already been constructed. For
every $a\in \Cset^{d+1}$ and $b\in \Cset^d$ let
$\rho = a_2 - b_2 -(a_1-b_1)b_1 \in \mathbb{C}$, as well as
\[
c = \bigl( a_3 - b_3-(a_1-b_1)b_2 ,\ldots, a_d - b_d -(a_1-b_1)b_d,
a_{d+1} - (a_1 -b_1)b_d \bigr) \in \mathbb{C}^{d-1},
\]
and define
\[
Q_{d+1}(a,b) := \rho^{1+ {\rm deg}\, Q_d}Q_d\left(b,\frac{c}{\rho}\right)\, ,
\]
where ${\rm deg} \left( \sum_{j}
c_jx_1^{n_{1,j}}x_2^{n_{2,j}}\ldots x_{\ell}^{n_{\ell,j}}\right) := \max \left\{ n_{1,j}
+\ldots + n_{\ell,j} : c_j \neq 0\right\}$.
Clearly, $Q_{d+1}$ is a polynomial in $2d+1$ variables, and $Q_{d+1}
\neq 0$. If $p_a(z_0) = p_b(z_0) = 0$ for some $z_0 \in \mathbb{C}$, then
\begin{equation}
\label{eq:8}
\begin{split}
0 & =  p_a(z_0) - \bigl( z_0 + (a_1-b_1)\bigr)p_b(z_0) \\
  & =  \sum\nolimits_{j=1}^{d-1} \left(a_{j+1}-b_{j+1}-(a_1-b_1)b_i\right)z_0^{d-j} + a_{d+1}-(a_1-b_1)b_d \, .
\end{split}
\end{equation}
If $\rho = 0$, then clearly $Q_{d+1}(a,b) =0$. Otherwise, it is easy
to check that \eqref{eq:8} implies $p_{c/\rho}(z_0)=0$, in which case
$Q_d(b,c/\rho)=0$, by assumption. In either case, therefore, $Q_{d+1}(a,b)=0$.
\end{proof}

\begin{cor}
\label{corr:1} For every integer $d > 1$, there exists a non-trivial
polynomial $Q_d^*$ in $d$ variables such that $Q_d^*(a) = 0$
whenever $p_a(z_0) = p_a'(z_0) = 0$ for some $z_0 \in \mathbb{C}$.
\end{cor}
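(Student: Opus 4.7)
The plan is to specialize Lemma~\ref{prop:1} to the case where $p_b$ equals a monic rescaling of $p_a'$. Since
\[
p_a'(z)/d \;=\; z^{d-1} + \tfrac{d-1}{d}\, a_1 z^{d-2} + \tfrac{d-2}{d}\, a_2 z^{d-3} + \cdots + \tfrac{1}{d}\, a_{d-1}
\]
is monic of degree $d-1$, I would set $b(a):=\bigl(\tfrac{d-1}{d}\,a_1,\,\tfrac{d-2}{d}\,a_2,\,\ldots,\,\tfrac{1}{d}\,a_{d-1}\bigr)\in\Cset^{d-1}$, so that $p_{b(a)}(z)=p_a'(z)/d$, and then define $Q_d^*(a):=Q_d\bigl(a,b(a)\bigr)$, which is a polynomial in $a=(a_1,\ldots,a_d)$ with rational coefficients.

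The defining property is then immediate from Lemma~\ref{prop:1}: if $p_a(z_0)=p_a'(z_0)=0$ for some $z_0\in\Cset$, then also $p_{b(a)}(z_0)=p_a'(z_0)/d=0$, so $Q_d^*(a)=Q_d(a,b(a))=0$.

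The main obstacle is verifying that $Q_d^*$ is not the zero polynomial. For $d=2$, a direct substitution gives $Q_2^*(a)=a_1^2/4-a_2$, which is manifestly non-trivial (take e.g.\ $a=(0,-1)$). For general $d$, I would exhibit a concrete $a$ at which $Q_d^*(a)\neq 0$ --- for instance $a=(0,a_2,0,\ldots,0)$ with $a_2\neq 0$ already works for $d=3$, yielding $Q_3^*(a)=-8a_2^4/81\neq 0$ --- and then prove by induction on $d$ that suitable sparse choices of $a$ keep the inductive factor $\rho$ arising in the recursive construction of $Q_{d+1}$ non-zero, so that a non-vanishing evaluation propagates through the recursion. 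An alternative (and perhaps cleaner) route is to note that any polynomial vanishing on the discriminant hypersurface of $p_a$ --- an irreducible proper subvariety of $\Cset^d$ --- is either identically zero or a scalar multiple of $\mathrm{Disc}(p_a)$; producing a single non-zero value of $Q_d^*$ then rules out the former and settles the claim.
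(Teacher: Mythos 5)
Your proposal is exactly the paper's proof: the authors simply set $Q_d^*(a) := Q_d(a,b)$ with $b = \bigl( \tfrac{d-1}{d}a_1, \ldots , \tfrac{1}{d}a_{d-1}\bigr)$, i.e.\ the coefficient vector of the monic rescaling $p_a'/d$, and invoke Lemma~\ref{prop:1}. Your additional verification that $Q_d^*$ is not the zero polynomial (e.g.\ $Q_2^*(a)=a_1^2/4-a_2$ and $Q_3^*(0,a_2,0)=-8a_2^4/81$, both of which check out against the recursion) addresses a point the paper leaves implicit, and is a sound way to close that small gap.
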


\begin{proof}
Take $Q_d^* = Q_d(a,b)$ with $b = \left( \frac{d-1}{d}a_1,
\frac{d-2}{d}a_2, \ldots , \frac{2}{d}a_{d-2},
\frac{1}{d}a_{d-1}\right)$.
\end{proof}

This corollary will now be used to show
that if a stochastic matrix $P_0$ is invertible and has distinct
non-zero eigenvalues, then all stochastic matrices $P$ sufficiently
close to $P_0$ also are invertible and have distinct non-zero
eigenvalues. In fact, these eigenvalues are real-analytic
functions of $P$. To formulate this efficiently,
for every $P_0 \in \mathcal{P}_d$ and $\varepsilon > 0$ denote by
$B_{\varepsilon}(P_0)$ the open ball with radius $\varepsilon$
centered at $P_0$, i.e. $B_{\varepsilon}(P_0) = \bigl\{ P \in
\mathcal{P}_d : |P^{(i,j)}-P_0^{(i,j)}| < \varepsilon \mbox{ for all
} 1 \le i,j \le d \bigr\}$.

\begin{lemma}
\label{lem:7} Suppose $P_0 \in \mathcal{P}_d$ is invertible and has $d$
distinct non-zero eigenvalues. Then there exists $\varepsilon >0$
and and $d-1$ non-constant real-analytic functions $\lambda_2,\ldots,\lambda_d :
B_{\varepsilon}(P_0) \rightarrow \mathbb{C}$ such that, for every $P
\in B_{\varepsilon}(P_0)$,
\begin{enumerate}
\item $1,\lambda_2(P),\ldots,\lambda_d(P)$ are the eigenvalues of $P$,
and $\lambda_2(P) \cdot \ldots \cdot \lambda_d(P) \neq 0$;
\item $\lambda_i(P) \neq \overline{\lambda_j(P)}$ whenever $i \neq j$,
unless $\lambda_i = \overline{\lambda_j}$ on $B_{\varepsilon}(P_0)$.
\end{enumerate}
\end{lemma}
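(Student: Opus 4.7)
The plan is to apply the analytic Implicit Function Theorem to the characteristic polynomial and then to extract the stated properties from the resulting real-analytic branches. Begin with $\psi_P(\lambda) := \det(\lambda I - P)$, whose coefficients are polynomials in the entries of $P$, hence real-analytic. Since $P\mathbf{1}=\mathbf{1}$ for every $P \in \mathcal{P}_d$, the value $\psi_P(1)=0$ holds identically on $\mathcal{P}_d$, so polynomial division yields
\[
\psi_P(\lambda) = (\lambda - 1)\, q_P(\lambda),
\]
where $q_P$ is monic of degree $d-1$ and its coefficients are again polynomial in the entries of $P$, hence real-analytic functions of $P$.

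By hypothesis the $d-1$ roots of $q_{P_0}$, namely $\lambda_2(P_0), \ldots, \lambda_d(P_0)$, are pairwise distinct, non-zero, and different from $1$. Each is therefore a simple root of $q_{P_0}$, so $(\partial q_P / \partial \lambda)(\lambda_j(P_0), P_0) \neq 0$ for every $j \in \{2,\ldots,d\}$. The analytic Implicit Function Theorem then produces, for each such $j$, a real-analytic extension $\lambda_j : B_{\varepsilon_j}(P_0) \to \Cset$ with $q_P(\lambda_j(P)) \equiv 0$ and $\lambda_j(P_0)$ equal to the prescribed root. Choosing $\varepsilon > 0$ smaller than every $\varepsilon_j$ and small enough that the strict inequalities $\lambda_j(P) \neq 0$ and $\lambda_i(P) \neq \lambda_j(P)$ (for $i \neq j$) persist on $B_\varepsilon(P_0)$ by continuity yields assertion (i).

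For non-constancy, suppose $\lambda_j$ were identically equal to $c := \lambda_j(P_0) \neq 1$ on $B_\varepsilon(P_0)$. Then $P \mapsto q_P(c)$, being a polynomial in the entries of $P$, would vanish on an open subset of the affine hull of $\mathcal{P}_d$ and therefore on all of $\mathcal{P}_d$. But testing at $P = I \in \mathcal{P}_d$ gives $q_I(c) = (c-1)^{d-1} \neq 0$, a contradiction.

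For (ii), fix $i \neq j$. If $\lambda_i(P_0) \neq \overline{\lambda_j(P_0)}$, a further shrinkage of $\varepsilon$ preserves the inequality by continuity. If instead $\lambda_i(P_0) = \overline{\lambda_j(P_0)}$, observe that $q_P$ has real coefficients, so $P \mapsto \overline{\lambda_j(P)}$ is itself a real-analytic root of $q_P$ agreeing with $\lambda_i$ at $P_0$; uniqueness in the Implicit Function Theorem then forces $\lambda_i(P) \equiv \overline{\lambda_j(P)}$ throughout $B_\varepsilon(P_0)$, which is precisely the permitted exception. The main delicacy, I expect, is arranging a \emph{single} radius $\varepsilon > 0$ that serves simultaneously for the finitely many open conditions above — separation from $0$, pairwise distinctness, avoidance of spurious conjugate coincidences, and applicability of the Implicit Function Theorem at each simple root — but since each condition is strict at $P_0$ and depends continuously on $P$, taking the minimum of finitely many candidate radii resolves it.
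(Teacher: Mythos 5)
Your argument is correct and follows essentially the same route as the paper: the analytic Implicit Function Theorem at the simple roots of the characteristic polynomial, non-constancy via the identity theorem for real-analytic functions tested at $P=I_{d\times d}$, and part (ii) via the fact that conjugates of roots of a real polynomial are again roots, propagated by real-analyticity on the connected ball. Factoring out $(\lambda-1)$ to work with $q_P$ rather than $\psi_P$ is a harmless cosmetic variation (the paper evaluates $\psi_I(c)=(c-1)^d$ where you evaluate $q_I(c)=(c-1)^{d-1}$), and your handling of (ii) by checking coincidence at $P_0$ and shrinking $\varepsilon$ otherwise is an equally valid rearrangement of the paper's dichotomy argument.
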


\begin{proof}
Note first that by the continuity of $(P,z) \mapsto \det (z I_{d\times d} - P)=\psi_P(z)$,
there exists $\delta >0$ such that every $P \in B_{\delta}(P_0)$ is invertible and
has distinct non-zero eigenvalues. Thus the characteristic polynomial $\psi_P$ of $P$
has $d-1$ distinct non-zero roots different from $1$. Let $z_0$ be one of those roots.
Since $z_0$ is a simple root, $\psi_{P_0}'(z_0) \neq 0$, so by the Implicit Function
Theorem \cite[Theorem 2.3.5]{Krantz02}, $z_0$ depends real-analytically on the
coefficients of $\psi_{P}$ which themselves are real-analytic (in fact polynomial)
functions of the entries of $P$. More formally, there exists $\varepsilon \le \delta$
and a real-analytic function $g : B_{\varepsilon}(P_0) \rightarrow \mathbb{C}$
with $g(P_0)=z_0$ such that $\psi_{P}\bigl(g(P)\bigr)=0$ for all $P \in
B_{\varepsilon}(P_0)$. Overall, there exists $\varepsilon > 0$ and $d-1$
real-analytic functions $\lambda_i : B_{\varepsilon}(P_0) \rightarrow \mathbb{C}$
satisfying (i); note that $\lambda_1 \equiv 1$ by Lemma \ref{lem:6}. To see that
$\lambda_2 , \ldots , \lambda_d$ are not constant on $B_{\varepsilon}(P_0)$,
suppose by way of contradiction that  $\lambda_i(P) = \lambda_i(P_0) \neq 1$
for some $2 \leq i \leq d$ and all $P \in B_{\varepsilon}(P_0)$. In this case,
the real-analytic function $P \mapsto \psi_P\bigl(\lambda_i(P_0)\bigr)$ vanishes
identically on $B_{\varepsilon}(P_0)$, and hence on all of $\mathcal{P}_d$. Since
$I_{d \times d} \in \mathcal{P}_d$, this obviously contradicts
$\psi_{I_{d \times d}} \bigl(\lambda_i(P_0)\bigr) = \left( \lambda_i(P_0) - 1
\right)^d \neq 0$. Consequently, none of the functions $\lambda_2
,\ldots, \lambda_d : B_{\varepsilon}(P_0) \rightarrow \mathbb{C}$ is
constant.

To show (ii), assume that $\lambda_i(P_1) = \overline{\lambda_j(P_1)}$
for some $i \neq j$ and $P_1 \in B_{\varepsilon}(P_0)$. Thus
$\lambda_i(P_1) \in \mathbb{C}\setminus \mathbb{R}$, since if
$\lambda_i(P_1)$ were real, then $\lambda_i(P_1) = \lambda_j(P_1)$,
which is impossible since the eigenvalues are distinct. Since all
matrices in $\mathcal{P}_d$ are \emph{real}, their non-real eigenvalues
occur in conjugate pairs. Hence, for all $P$ sufficiently close to
$P_1$, the number $\overline{\lambda_j (P)}$ is an eigenvalue of $P$
which, by continuity, can only be $\lambda_i(P)$. Consequently,
$\lambda_i$ and $\overline{\lambda_j}$ coincide locally near $P_1$
and therefore, by real-analyticity, on all of $B_{\varepsilon}(P_0)$.
\end{proof}

By means of the above auxiliary results, several almost sure properties
of random Markov chains can be identified.

\begin{lemma}
\label{prop:2} If the rows of the random Markov chain $\bm{P}$
are independent and continuous then, with probability one,
\begin{enumerate}
\item $\bm{P}$ is irreducible, aperiodic, and invertible;
\item $\bm{P}$ has $d$ distinct non-zero eigenvalues; and
\item $\bm{P}$ is nonresonant.
\end{enumerate}
\end{lemma}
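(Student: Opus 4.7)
The plan is to reduce each of (i)--(iii) to the assertion that a particular ``bad'' subset of $\mathcal{P}_d$ has Lebesgue measure zero. The reduction is immediate: since the rows $\bm{X}_1, \ldots, \bm{X}_d$ are independent and each continuous on $\Delta_d$, the joint distribution of $\bm{P}$ is absolutely continuous with respect to the product Lebesgue measure on $\Delta_d^d \cong \mathcal{P}_d$, so any Lebesgue nullset in $\mathcal{P}_d$ automatically has $\mathbb{P}$-probability zero.

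For (i), the event that some entry of $\bm{P}$ equals $0$ is contained in a finite union of relative boundary faces of $\Delta_d^d$ of positive codimension, hence a Lebesgue nullset; positivity of every entry forces $\bm{P}$ to be irreducible and aperiodic. Invertibility fails on the zero set of the polynomial $P \mapsto \det P$, which is non-trivial on $\mathcal{P}_d$ (e.g.\ any diagonally dominant stochastic matrix has positive determinant), and the zero set of a non-trivial polynomial is a nullset. For (ii), invertibility already rules out $0$ as an eigenvalue, and Corollary \ref{corr:1} confines the event ``$\bm{P}$ has a repeated eigenvalue'' to the zero set of the composite polynomial $P \mapsto Q_d^{*}\bigl(\text{coefficients of }\psi_P\bigr)$; non-triviality of this polynomial on $\mathcal{P}_d$ follows from Example \ref{ex:2}, which exhibits matrices with distinct eigenvalues.

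For (iii), let $U \subset \mathcal{P}_d$ denote the open set of $P$ satisfying (i) and (ii); by what precedes, $\mathcal{P}_d \setminus U$ is Lebesgue-null. Lemma \ref{lem:7} supplies, for every $P_0 \in U$, a ball $B_\varepsilon(P_0) \subset U$ on which $\lambda_2, \ldots, \lambda_d$ are non-constant real-analytic functions of $P$; by second countability, $U$ is covered by countably many such balls, so it suffices to prove the claim inside each ball. On $B_\varepsilon(P_0)$ the set of resonant $P$ is contained in a countable union, indexed by subsets $J \subset \{2,\ldots,d\}$ and non-zero integer tuples $\vec{q}$, of zero sets of the real-analytic functions $P \mapsto |\lambda_{i}(P)|^{2} - |\lambda_{j}(P)|^{2}$ (forcing common modulus), $P \mapsto \lambda_i(P) + \lambda_j(P)$ (handling two real eigenvalues of equal modulus), and
\[
f_{J,\vec{q}}(P) = q_0 + q_1 \log|\lambda_{j_1}(P)| + \sum\nolimits_{k\ge 2} q_k \cdot {\textstyle \frac{1}{2\pi}} \arg \lambda_{j_k}(P).
\]
The zero set of a real-analytic function on a connected open set is either the whole set or a Lebesgue nullset, so it only remains to verify that each of the above functions is not identically zero on $B_\varepsilon(P_0)$.

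This non-triviality is the main obstacle. For the equal-modulus and opposite-sign conditions it reduces, via symmetric functions of the eigenvalues or the resultant $\mathrm{Res}\bigl(\psi_P(z), \psi_P(-z)\bigr)$, to a polynomial in the entries of $P$, whose non-triviality on $\mathcal{P}_d$ is witnessed by any matrix with distinct positive eigenvalues (e.g.\ Example \ref{ex:2}(ii)). For the $\log$--$\arg$ functions $f_{J,\vec{q}}$ more care is required, since they are only locally defined and only real-analytic (not polynomial) in $P$. Here I would exploit the non-constancy guaranteed by Lemma \ref{lem:7} together with a dimension count --- $\mathcal{P}_d$ has $d(d-1)$ real degrees of freedom while the target $\Cset^{d-1}$ of the eigenvalue map has real dimension $2(d-1)$ --- to show that $P \mapsto (\lambda_2(P), \ldots, \lambda_d(P))$ is submersive at a generic point of $B_\varepsilon(P_0)$, so that $\log|\lambda_j|$ and $\arg \lambda_{j_k}$ can be varied sufficiently independently to preclude any fixed non-trivial rational linear relation holding identically on the ball. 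As a cross-check, one may use that $U$ is connected (being the complement of a real-analytic subvariety in the convex set $\mathcal{P}_d$): identical vanishing of $f_{J,\vec{q}}$ on any ball would, by analytic continuation of the corresponding permutation-invariant resonance condition along paths in $U$, propagate to all explicit nonresonant matrices (Example \ref{ex:5}), a contradiction.
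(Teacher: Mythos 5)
Your treatment of (i) and (ii) matches the paper's: both reduce the bad events to zero sets of non-trivial polynomials ($\det$, and the discriminant-type polynomial built from Corollary \ref{corr:1}), and transfer Lebesgue nullsets to probability zero via independence together with row-wise absolute continuity. For (iii) your overall scheme --- cover the resonant matrices inside each ball supplied by Lemma \ref{lem:7} by countably many zero sets of real-analytic functions, then show each such function is not identically zero --- is also the paper's. But at the one step you yourself flag as ``the main obstacle,'' the non-vanishing of the resonance-detecting functions, your argument does not close. The asserted generic submersivity of $P \mapsto \bigl(\lambda_2(P),\ldots,\lambda_d(P)\bigr)$ subject to the row-sum constraint is precisely what would need proof and is not a routine dimension count; Lemma \ref{lem:7} gives only non-constancy of each individual $\lambda_i$, and that does not rule out a fixed rational relation among $1$, $\log|\lambda_{j_1}(P)|$ and the $\frac{1}{2\pi}\arg\lambda_{j_k}(P)$ holding identically on the ball. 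The paper supplies the missing concrete ingredient by exhibiting an explicit perturbation that stays inside $\mathcal{P}_d$, namely $P \mapsto (1-\delta)P + \delta I_{d\times d}$, whose eigenvalues are $(1-\delta)\lambda_i(P)+\delta$, and by computing directly that for small $\delta>0$ this makes $\Phi_{\rho,i}$ and $\Theta_{i,j}$ strictly positive; some such explicit deformation (or a genuine proof of submersivity) is what your write-up lacks.

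Your fallback ``cross-check'' is moreover invalid: $U$ is in general not connected, because a real-analytic hypersurface can disconnect a convex set (unlike a complex-analytic one). Already for $d=2$ the set $\{\det P = 0\} = \{x+y=1\}$ cuts $\mathcal{P}_2 \cong [0,1]^2$ into two pieces, so analytic continuation from the explicit matrices of Example \ref{ex:5} to an arbitrary ball is not available. Two smaller remarks: the functions $f_{J,\vec{q}}$ are not real-analytic across the branch cut of $\arg$, so the balls must be shrunk or $e^{2\pi i f_{J,\vec{q}}}$ used instead; and, to your credit, your covering of the resonant set is more faithful to Definition \ref{def:4} than the paper's own $F_\rho$, which detects only rational $\log$-moduli and non-conjugate equal-modulus pairs and does not visibly capture resonances arising from rational dependences involving the arguments (as in Example \ref{ex:6}(iii)). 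Identifying the right family of functions is not, however, the same as proving their non-triviality, and that proof is absent.
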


\begin{proof}
Fix $\bm{P}$ and assume its rows
$\bm{X}_1,\ldots,\bm{X}_d$ are independent and continuous.

\medskip
\noindent
(i) Since each $\bm{X}_i$ is continuous, $\mathbb{P}(\bm{X}_i \in A)=0$
for every Lebesgue nullset $A \subset \Delta_d$, so in particular $\mathbb{P}(\bm{X}_{i,j}
\in \{0,1\}) =0$ for all $i$ and $j$. With probability one, therefore,
$\bm{P}^{(i,j)} \in (0,1)$ for all $i$ and $j$, and $\bm{P}$ is irreducible
and aperiodic. To see that $\bm{P}$ is almost surely invertible, note that
$P\mapsto \det P$ is a non-constant, real-analytic function on $\mathcal{P}_d$.
With $N= \bigl\{(x_1, \ldots, x_d)\in \Delta_d \times \ldots \times \Delta_d : \det(x_1, \ldots , x_d)=0
\bigr\}$,
\begin{align*}
\mathbb{P}(\det \bm{P}=0) & = \int\nolimits_{N} {\rm d}\mathbb{P} (x_1, \ldots , x_d) =
\int \!\! \cdots \!\! \int\nolimits_N {\rm d}\mathbb{P}(x_1) \ldots {\rm d} \mathbb{P}(x_d) \\
& = \int \!\! \cdots \!\! \int \left( \int\nolimits_N {\rm d}\mathbb{P} (x_1) \right) {\rm d}\mathbb{P}(x_2) \ldots
{\rm d}\mathbb{P}(x_d) = 0 \, ,
\end{align*}
where the second equality follows from the independence of $\bm{X}_1, \ldots, \bm{X}_d$,
the third from Fubini's theorem, and the fourth from the continuity of the $\bm{X}_i$.

\medskip
\noindent
(ii) There exist $d$ non-constant polynomial functions
$q_1,\ldots,q_d : \mathcal{P}_d \rightarrow \mathbb{R}$ such that
\[
\psi_P(z) = \det\left( zI_{d \times d} - P \right) = z^d +
q_1(P)z^{d-1} + \ldots + q_{d-1}(P)z + q_d(P)
\]
holds for all $P \in \mathcal{P}_d$ and $z \in \mathbb{C}$; for
example, $q_1(P) = - \sum_{i=1}^d P^{(i,i)}$ and $q_d(P)= (-1)^d\det P$.
Consequently, $q(P) := Q_d^*\bigl(q_1(P),\ldots,q_d(P)\bigr)$ defines a
non-constant real-analytic (in fact, polynomial) map $q : \mathcal{P}_d
\rightarrow \mathbb{R}$, and since $z_0$ is a multiple eigenvalue of
$P$ if and only if $\psi_P(z_0) = \psi_P'(z_0) = 0$, Corollary
\ref{corr:1} implies that
\[
\bigl\{ P \in \mathcal{P}_d : P \mbox{ has multiple eigenvalues}\,
\bigr\} \subset \bigl\{ P \in \mathcal{P}_d : q(P) = 0 \bigr\} \, .
\]
As before, by Fubini's Theorem $\mathbb{P}(q(\bm{P})=0)=0$,
showing that with probability one all eigenvalues of $\bm{P}$ are simple.

\medskip
\noindent
(iii) For every $\rho \in \mathbb{Q}$ define the real-analytic auxiliary
function $\Phi_{\rho} : \mathbb{R}^2 \rightarrow \mathbb{R}$ by
$\Phi_{\rho}(x) := (x_1^2 +x_2^2-10^{2\rho})^2$, and also
$\Theta: \Rset^4 \to \Rset$ as $\Theta(x) := \left(x_1^2+x_2^2-x_3^2-x_4^2 \right)^2$.
By (i) and (ii), $\bm{P}$ almost surely satisfies the hypotheses of Lemma
\ref{lem:7}, so let $P_0$, $\varepsilon$, and $\lambda_2,\ldots,\lambda_d$
be as in Lemma \ref{lem:7}, and define real-analytic functions $\Phi_{\rho,i}$
and $\Theta_{i,j}$ on $B_{\varepsilon}(P_0)$ as
\[
\Phi_{\rho,i}(P) := \Phi_{\rho} \bigl( \mathfrak{Re}\lambda_i(P),
\mathfrak{Im}\lambda_i(P)\bigr) = \left( |\lambda_i(P)|^2 -
10^{2\rho}\right)^2\, ,  \quad \forall i : 2 \le i \le d\, ,
\]
and, for all $2 \le i,j \le d$,
\[
\Theta_{i,j}(P) := \Theta\bigl( \mathfrak{Re}\lambda_i(P),
\mathfrak{Im}\lambda_i(P), \mathfrak{Re} \lambda_j(P), \mathfrak{Im}
\lambda_j(P) \bigr) = \left( |\lambda_i(P)|^2 -
|\lambda_j(P)|^2\right)^2.
\]
Finally, let $F_{\rho} : B_{\varepsilon}(P_0) \rightarrow
\mathbb{R}$ be defined as
\[
F_{\rho}(P) := \prod\nolimits_{i=2}^d \Phi_{\rho,i}(P) \cdot \prod\nolimits_{2\le i<j : \lambda_i
\neq \overline{\lambda_j}} \Theta_{i,j}(P) \, .
\]
The definition of $F_{\rho}$ becomes transparent upon noticing that $F_{\rho}(P)=0$
for some $\rho\in \Qset$ whenever $P$ is invertible and resonant.
Next, it will be shown that $F_{\rho}$ does not vanish identically on $B_{\varepsilon}(P_0)$.
To see this, note first that if $P \in B_{\varepsilon}(P_0)$, then also
$(1-\delta)P+\delta I_{d \times d} \in B_{\varepsilon}(P_0)$ for all
sufficiently small $\delta > 0$. Moreover, if $\Phi_{\rho,i}(P)=0$ for
some $i=2,\ldots,d$, then
\begin{align*}
\Phi_{\rho,i}\bigl((1-\delta)P+\delta I_{d \times d} \bigr) & =
\left(  \bigl((1-\delta)\mathfrak{Re} \lambda_i(P)+\delta\bigr)^2 +
(1-\delta)^2\mathfrak{Im} \lambda_i(P)^2-10^{2\rho}\right)^2 \\
& = \delta^2 \Bigl( (2-\delta) \left(\mathfrak{Re}
\lambda_i(P)-|\lambda_i(P)|^2\right) + \delta \bigl(( 1-\mathfrak{Re}
\lambda_i(P)\bigr) \Bigr)^2 > 0\, ,
\end{align*}
provided that $\delta >0$ is small enough. (Recall that $1-\mathfrak{Re} \lambda_i(P) > 0$
whenever $P \in B_{\varepsilon}(P_0)$.) Similarly, if $\Theta_{i,j}(P)=0$ for some
$2 \leq i < j \leq d$ with $\lambda_i \neq \overline{\lambda_j}$ and $\lambda_i(P) \neq 0$,
then a short calculation confirms that, for all $\delta>0$ sufficiently small,
\[
\Theta_{i,j}\bigl( (1-\delta)P + \delta I_{d \times d} \bigr) =
\delta^2(1-\delta)^2 \frac{|\lambda_i(P) - \lambda_j(P)|^2
|\lambda_i(P) - \overline{\lambda_j(P)}|^2}{|\lambda_i(P)|^2} > 0\, .
\]
Overall, $F_{\rho}$ does not vanish identically on $B_{\varepsilon}(P_0)$.
As every $P \in B_{\varepsilon}(P_0)$ is invertible,
$$
\bigl\{ P \in B_{\varepsilon}(P_0): P \mbox{ is resonant}\,
\bigr\} \subset \bigcup\nolimits_{\rho \in \mathbb{Q}} \bigl\{ P \in
B_{\varepsilon}(P_0) : F_{\rho} (P)= 0 \bigr\} \, .
$$
Since $F_{\rho}$ is real-analytic and non-constant, $\bigl\{
 P \in B_{\varepsilon}(P_0) : F_{\rho}(P) = 0 \bigr\}$ is a nullset
for every $\rho \in \Qset$, and so is
$\bigcup_{\rho \in \mathbb{Q}} \bigl\{  P \in B_{\varepsilon}(P_0) : F_{\rho} (P)= 0 \bigr\}$.
Analogously to (i) and (ii), therefore,
$\mathbb{P} \left( \bm{P} \mbox{ is resonant}\, \right) =
 0$.
\end{proof}

\begin{proof}[Proof of Theorem \ref{thm:2}]
Let $\bm{X}_1,\ldots,\bm{X}_d$ denote the random transition
probabilities (row vectors) of the random $d \times d$-matrix
$\bm{P}$. If $\bm{X}_1,\ldots,\bm{X}_d$ are independent and
continuous, then by Lemma \ref{prop:2}, $\bm{P}$ is almost surely
irreducible, aperiodic, and nonresonant. By Theorem \ref{thm:1},
this implies that $\bm{P}$ is Benford with probability one.
\end{proof}

\begin{rem}

\medskip

(i) It is clear that without independence, or without continuity,
Lemma \ref{prop:2} and Theorem \ref{thm:2} are generally false. For
example, for the conclusion of Lemma \ref{prop:2} to hold it is not
enough to assume that the distribution on $\Delta_d$ of each row of
$\bm{P}$ is atomless. As very simple examples show, under this
weaker assumption, $\bm{P}$ may, with positive probability, be
reducible and have multiple or zero eigenvalues. Even if Lemma
\ref{prop:2} (i,ii) hold with probability one, $\bm{P}$ may still
be resonant and not Benford. To see this, consider the random three-state
Markov chain
$$
\bm{P} = \frac1{40} \left[
\begin{array}{lll}
\bm{X} + 4 & \bm{X} & 36 - 2 \bm{X} \\
\bm{Y}     & \bm{Y} + 4 & 36 - 2 \bm{Y} \\
\bm{Z} + 2 & \bm{Z} + 2 & 36 - 2 \bm{Z}
\end{array}
\right] \, ,
$$
where $\bm{X}, \bm{Y}, \bm{Z}$ are independent and uniformly distributed on $[0,1]$.
The eigenvalues of $\bm{P}$ are
$$
\lambda_1 = 1 \, , \quad
\lambda_2 = 0.1 \, , \quad
\lambda_3 = {\textstyle \frac1{40}} (\bm{X} + \bm{Y} - 2 \bm{Z})\, .
$$
Note that $|\lambda_3| \le 0.05 < \lambda_2$. Clearly, $\bm{P}$ is resonant
with probability one, and Lemma \ref{prop:2}(iii) fails. Perhaps even more
importantly, Theorem \ref{thm:2} fails as well since, as spectral decomposition
shows, $B_2 \ne 0$ with probability one and hence $\mathbb{P}(\bm{P} \mbox{ is Benford}\,)=0$.

\medskip

(ii) With hardly any effort, the tools employed in the proof of Lemmas
\ref{lem:7} and \ref{prop:2} also yield a topological analogue of Theorem
\ref{thm:2}: Within the compact metric space $\mathcal{P}_d$, the matrices
that are irreducible, aperiodic, invertible and nonresonant form a
{\em residual\/} set, that is, a set whose complement is the countable
union of nowhere dense sets. Being Benford, therefore, is a typical
property for $P\in \mathcal{P}_d$ not only under a probabilistic perspective
but under a topological perspective as well.

\end{rem}


\section{Simulations}\label{s:sim}

In this section, numerical simulations will illustrate the theoretical
results of previous sections, and based on these simulations the
\emph{rate of convergence\/} towards BL will be discussed. Since it
is not possible to observe the empirical frequencies of infinite
sequences, $(P^n - P^*)$ and $(P^{n+1} - P^{n})$ are simulated up to
a predefined value of $n$, such as $n=1000$ or $n=10000$, and the
empirical distributions of first significant digits of each
component are compared to the Benford probabilities. For some
Markov chains, simulations up to $n=1000$ yield empirical frequencies
very close to BL, whereas for others even $n=10000$ does not give a
good approximation, although theoretically all chains considered here
follow BL. Thus, convergence rates towards BL may differ significantly.

\begin{exam}
\label{ex:11}
From Table \ref{tab:5}, it is clear that the sequences $(2^n)$,
$(n!)$, $(F_n)$ give different empirical frequencies for the
simulation up to $n=1000$. Compared to the other two, $(F_n)$ gives
empirical frequencies much closer to BL.
\end{exam}

Similarly, rates of convergence can be discussed for Markov chains.
The important question is what property is creating the difference
in convergence rates. Theorem \ref{thm:2} shows that every homogeneous
Markov chain chosen independently and continuously is Benford with
probability one. Besides irreducibility and aperiodicity, nonresonance
is crucial. Irreducibility and aperiodicity do not determine the rate
of convergence. This leaves nonresonance as the only source for different
rates of convergence. According to Definition \ref{def:4}, nonresonance is
based on the rational independence of $1$, $\log L_0$ and the elements
of $\frac{1}{2 \pi} \arg \Lambda_0$, provided that $\Lambda_0 \ne \emptyset$.
Thus, it is natural to expect this rational independence to be reflected
in some quantitative manner in the rate of convergence towards BL.

It is well known that there are infinitely many rational approximations
for a given accuracy to any irrational number. Let $x$ be an irrational
number. Given any $\varepsilon>0$, there exist infinitely many pairs
$(p,q) \in \Zset\times \Nset$ with $\gcd{(p,q)}=1$ and
$$
\label{eq:ratM} \left| x - \frac{p}{q}\right| < \varepsilon\, .
$$
One way to obtain rational approximations of irrational numbers is provided
by the method of continued fractions. Every irrational real number $x$
is represented uniquely by its continued fraction expansion
\[
x = a_0 + \frac{1}{a_1+{\displaystyle \frac{1}{a_2+{\displaystyle \frac{1}{a_3 + \cdots}}}}}
\, ,
\]
also denoted as $x=[a_0;a_1,a_2,a_3,\ldots]$, where $a_0 \in \Zset$ and
$a_n \in \Nset$ for $n \ge 1$ are referred to as the \emph{partial quotients\/}
of $x$. By \cite[Theorem 149]{Hardy60}, if $p_n$ and $q_n$ are
defined iteratively as
\begin{align*}
p_0 & = a_0\, , \quad   p_1 =  a_1a_0+1 \, ,   & p_n  = a_n p_{n-1}+p_{n-2} \, , \quad \forall n \ge 2\, ,\\
q_0 & = 1 \,  , \quad \:\: q_1 =  a_1   \, ,   & q_n  = a_n q_{n-1}+q_{n-2} \, , \quad \forall n \ge 2\, ,
\end{align*}
then, for all $n\in \Nset$,
\[
\frac{p_n}{q_n} = a_0 + \frac{1}{a_1+{\displaystyle \frac{1}{a_2+{\displaystyle \frac{1}{\dots + {\displaystyle \frac1{a_n}}}}}}}
=:[a_0; a_1,\ldots,a_n] \, ;
\]
the rational numbers $p_n/q_n$ are called the \emph{convergents}
of the continued fraction of $x$. Leaving aside trivial exceptions,
best rational approximations to an irrational $x$ are of the form
$p_n/q_n$, and
\begin{equation}
\label{eq:cfrac1} \left| x - \frac{p_n}{q_n} \right| <
\frac{1}{a_{n+1}q_n^2}\, ,\quad \forall n \ge 2.
\end{equation}
It is clear from (\ref{eq:cfrac1}) that $p_n/q_n$ yields a particularly
good approximation of $x$ when $a_{n+1}$ is large. Hence $x$ can be
rapidly approximated if its continued fraction expansion contains a
sequence of rapidly increasing partial quotients. On the other hand,
if $(a_n)$ does not grow fast (or at all), then it is difficult to
approximate $x$ by a rational number with small error, see \cite{Hardy60,Kuipers74}
for details. For example, \cite[Ch. 2, Theorem 3.4]{Kuipers74} asserts
that if $(a_n)$ is bounded for some $x$ then the distribution mod 1 of
$(nx)$ approaches the uniform distribution rather quickly. Thus
irrationals which are hard to approximate by rational numbers, due
to a small upper bound on, or slow growth of $(a_n)$, are also
the ones for which one expects to see fast convergence to Benford
probabilities. Specifically, for the golden ratio
$\frac{1+\sqrt{5}}{2} = [1;1,1,1,\ldots ]$, every $a_n$ has the smallest
possible value. Since $\big|\log F_n - n \log\frac{1+\sqrt{5}}{2}\big|\to 0$
as $n\to \infty$, this may explain why the convergence to
BL is faster for the Fibonacci sequence than for the other two sequences
in Example \ref{ex:11}. (See \cite{Schatte90} for further insights
on BL for continued fractions.)

It is important to note that $(a_n)$ is unbounded for almost every $x$,
\cite[Theorem 196]{Hardy60}. Hence, in most simulations it is not possible
to observe convergence as fast as for the Fibonacci sequence. However, to
highlight the difference in rates of convergence and irrationality, two
examples are studied. The first $50$ partial quotients are given for
every relevant irrational number that arises.

\begin{exam1}
\label{ex:12} (Markov chain showing {\em fast\/} convergence)\\
Let $d=3$ and $ P = \left[ \begin{array}{ccc}
                      0.25 & 0.35 & 0.40 \\
                      0.30 & 0.45 & 0.25 \\
                      0.65 & 0.15 & 0.20
                      \end{array} \right]
$.
The eigenvalues of $P$ are $\lambda_1 = 1$ and $\lambda_{2,3} =
-\frac1{20} \mp \frac{1}{20}\sqrt{21}$ , hence
$ \sigma(P)^+ \setminus \{ \lambda_1 \} =
\bigl\{-\frac1{20} - \frac{1}{20}\sqrt{21}, -\frac1{20} + \frac{1}{20}\sqrt{21} \bigr\}$.
Since $\log |\lambda_2|$ and $\log |\lambda_3|$ are
irrational and different, $P$ is nonresonant. Thus Theorem~\ref{thm:1} implies
that the Markov chain defined by $P$ is Benford.

Table \ref{tab:1} shows the empirical frequencies of significant
digits for the first $1000$ and $10000$ terms of $(P^{n} - P^*)$,
respectively; the behavior of $(P^{n+1} - P^n)$ is very similar.

\begin{table}[ht]
\footnotesize{
\begin{center}
\begin{tabular}{c c c c c c c c c | c}
  \cline{1-10}
  & & & & & & & & & \\[-2mm]
  $(1,1)$ & $(2,1)$ & $(3,1)$ & $(1,2)$ &
  $(2,2)$ & $(3,2)$ & $(1,3)$ &
  $(2,3)$ & $(3,3)$ & Benford  \\[2.5mm]
  \hline \hline
  & & & & & & & & & \\[-3mm]
    0.300 &   0.301 &   0.300 &   0.303 &   0.303 &   0.299 &   0.300 &   0.306 &   0.300 &   0.30103 \\
    0.175 &   0.177 &   0.177 &   0.176 &   0.174 &   0.176 &   0.178 &   0.174 &   0.175 &   0.17609 \\
    0.126 &   0.124 &   0.123 &   0.125 &   0.125 &   0.125 &   0.124 &   0.124 &   0.127 &   0.12493 \\
    0.098 &   0.096 &   0.100 &   0.096 &   0.096 &   0.097 &   0.096 &   0.098 &   0.097 &   0.09691 \\
    0.078 &   0.081 &   0.079 &   0.080 &   0.080 &   0.079 &   0.079 &   0.078 &   0.077 &   0.07918 \\
    0.068 &   0.067 &   0.065 &   0.068 &   0.067 &   0.066 &   0.068 &   0.067 &   0.069 &   0.06694 \\
    0.058 &   0.059 &   0.059 &   0.056 &   0.057 &   0.060 &   0.059 &   0.057 &   0.058 &   0.05799 \\
    0.050 &   0.050 &   0.051 &   0.051 &   0.052 &   0.052 &   0.050 &   0.050 &   0.052 &   0.05115 \\
    0.047 &   0.045 &   0.046 &   0.045 &   0.046 &   0.046 &   0.046 &   0.046 &   0.045 &   0.04575 \\[2mm]
  \hline
  & & & & & & & & & \\[-3mm]
    0.3008 &   0.3009 &   0.3009 &   0.3011 &   0.3012 &   0.3008 &   0.3011 &   0.3017 &   0.3010 &   0.30103 \\
    0.1761 &   0.1762 &   0.1764 &   0.1762 &   0.1758 &   0.1762 &   0.1763 &   0.1759 &   0.1760 &   0.17609 \\
    0.1249 &   0.1250 &   0.1247 &   0.1248 &   0.1251 &   0.1249 &   0.1249 &   0.1249 &   0.1250 &   0.12493 \\
    0.0971 &   0.0968 &   0.0972 &   0.0969 &   0.0968 &   0.0970 &   0.0968 &   0.0969 &   0.0970 &   0.09691 \\
    0.0792 &   0.0793 &   0.0791 &   0.0792 &   0.0793 &   0.0790 &   0.0790 &   0.0790 &   0.0789 &   0.07918 \\
    0.0668 &   0.0669 &   0.0666 &   0.0670 &   0.0670 &   0.0668 &   0.0672 &   0.0671 &   0.0673 &   0.06694 \\
    0.0582 &   0.0582 &   0.0582 &   0.0580 &   0.0578 &   0.0582 &   0.0580 &   0.0577 &   0.0579 &   0.05799 \\
    0.0510 &   0.0509 &   0.0512 &   0.0510 &   0.0512 &   0.0514 &   0.0510 &   0.0512 &   0.0513 &   0.05115 \\
    0.0459 &   0.0458 &   0.0457 &   0.0458 &   0.0458 &   0.0457 &   0.0457 &   0.0456 &   0.0456 &   0.04575 \\[2mm]
  \hline \hline\\[-8mm]
\end{tabular}
\end{center}
}
\caption{\textit{Comparing empirical frequencies for the first significant digits
with Benford probabilities for the first $1000$ (top half) and $10000$ (bottom half)
terms of the sequences $(P^{n} - P^*)^{(i,j)}$, where $P$ is the transition
probability matrix in Example \ref{ex:12}.}} \label{tab:1}
\end{table}

\noindent
Since $|\lambda_2|>|\lambda_3|$, all that matters is how well
\begin{align*}
\log |\lambda _ 2| = [ -1; & 2, 4, 8,  1, 5, 1, 6, 3, 1, 2, 2,  1,  1, 2, 1, 1,  2, 1, 66, 5, 1, 1, 2, 1, 3, \\
&   1, 2, 1,  1, 3, 1, 3, 2, 3, 2, 7,  3, 86, 1, 1, 1,  1, 1, 26, 3, 1, 5, 3, 1, 5, \ldots ]
\end{align*}
is approximated by rational numbers. From the above, $a_n \le 86$ for all
$1 \le n \le 50$, and a rapid increase of quotients is not observed. This continued
fraction expansion should be compared to the ones in the example below.
\end{exam1}

\begin{exam1}
\label{ex:13} (Markov chain showing {\em slow\/} convergence)\\
Let $d=3$ and $ P= \left[ \begin{array}{ccc}
                    0.8 &  0.1 & 0.1 \\
                    0.3 &  0.3 & 0.4 \\
                    0.4 &  0.0 & 0.6
                     \end{array}\right],
$ with eigenvalues $\lambda_1 = 1$ and $\lambda_{2,3} =
\frac{7}{20}\pm \frac{1}{20}\sqrt{3}~i$. Thus
$\sigma(P)^+ \setminus \bigl\{ \lambda_1 \} = \{
\frac{7}{20}+\frac{1}{20}\sqrt{3}~i \bigr\} = :\Lambda_0$,
and the behavior of significant digits is governed by the two
irrational numbers
\begin{align*}
\log  |\lambda_2| =  [ -1; & 1, 1, 3, 1, 7, 1, 15, 1, 2, 1, 1, 7, 1, 6, 2, 1, 3, 1, 1, 2, 4, 1, 1, 2, 3,\\
&   8, 1, 2, 1, 1, 2, 1,  2, 1, 7, 1, 1, 2, 1,33, 1, 2, 1, 2, 1, 1, 11, 1, 24,8, \ldots ] \, , \\[2mm]
{\textstyle \frac1{2\pi}}\arg\lambda_2 = [ 0; & 25, 1, 9, 3,168,2,  1, 1,32, 1, 6, 3, 1, 9, 1, 1, 92,2, 13,2, 1, 1, 10,2, 5, \\
  & 1, 3, 1, 1, 1, 1,  3, 1, 2, 7, 1, 5, 1, 1, 4, 1, 3,14, 3,10, 1, 1, 3, 1, 3, \ldots ]\, .
\end{align*}
Note that $\max_{n=1}^{50}a_n = 33$ for $\log  |\lambda_2|$, whereas
$\max_{n=1}^{50}a_n = 168$ for $\frac1{2\pi}\arg \lambda_2$. When
compared with Example \ref{ex:12}, the repeated early high values in
the continued fraction expansion of $\frac1{2\pi}\arg \lambda_2$
suggest a somewhat slower convergence to BL. As shown in Table
\ref{tab:3}, this slower convergence is clearly recognizable in
simulations of $(P^n - P^*)$; again the behavior of $(P^{n+1} - P^n)$
is very similar.

\begin{table}[hbt]
\vspace*{4mm}
\small{
\begin{tabular}{c c c c c c c c c | c}
  \cline{1-10}
   & & & & & & & & & \\[-2mm]
  $(1,1)$ & $(2,1)$ & $(3,1)$ & $(1,2)$ &
  $(2,2)$ & $(3,2)$ & $(1,3)$ &
  $(2,3)$ & $(3,3)$ & Benford \\[2.5mm]
  \hline \hline
    & & & & & & & & & \\[-3mm]
    0.302 &   0.313 &   0.311 &   0.327 &   0.290 &   0.286 &  0.293  & 0.298 &   0.297 &   0.30103 \\
    0.176 &   0.169 &   0.170 &   0.152 &   0.178 &   0.181 &  0.192  & 0.181 &   0.184 &   0.17609 \\
    0.127 &   0.137 &   0.136 &   0.137 &   0.110 &   0.114 &  0.103  & 0.122 &   0.122 &   0.12493 \\
    0.096 &   0.081 &   0.085 &   0.087 &   0.101 &   0.101 &  0.123  & 0.105 &   0.102 &   0.09691 \\
    0.075 &   0.079 &   0.080 &   0.086 &   0.093 &   0.091 &  0.061  & 0.071 &   0.074 &   0.07918 \\
    0.074 &   0.080 &   0.084 &   0.072 &   0.055 &   0.056 &  0.061  & 0.063 &   0.069 &   0.06694 \\
    0.072 &   0.049 &   0.048 &   0.046 &   0.055 &   0.054 &  0.061  & 0.083 &   0.074 &   0.05799 \\
    0.039 &   0.047 &   0.043 &   0.046 &   0.056 &   0.055 &  0.070  & 0.038 &   0.041 &   0.05115 \\
    0.039 &   0.045 &   0.043 &   0.047 &   0.062 &   0.062 &  0.036  & 0.039 &   0.037 &   0.04575 \\[2mm]
  \hline
   & & & & & & & & & \\[-3mm]
  0.2998 & 0.3150 & 0.3158 & 0.3167 & 0.2910 & 0.2922 & 0.2938 & 0.2982 & 0.2981 & 0.30103 \\
  0.1798 & 0.1620 & 0.1610 & 0.1570 & 0.1865 & 0.1867 & 0.1877 & 0.1816 & 0.1821 & 0.17609 \\
  0.1312 & 0.1397 & 0.1399 & 0.1354 & 0.1069 & 0.1079 & 0.1090 & 0.1232 & 0.1236 & 0.12493 \\
  0.0943 & 0.0828 & 0.0837 & 0.0859 & 0.1002 & 0.0983 & 0.1192 & 0.1033 & 0.1027 & 0.09691 \\
  0.0716 & 0.0825 & 0.0825 & 0.0965 & 0.0877 & 0.0887 & 0.0640 & 0.0702 & 0.0698 & 0.07918 \\
  0.0753 & 0.0789 & 0.0782 & 0.0610 & 0.0570 & 0.0561 & 0.0600 & 0.0682 & 0.0694 & 0.06694 \\
  0.0665 & 0.0476 & 0.0478 & 0.0496 & 0.0550 & 0.0546 & 0.0618 & 0.0748 & 0.0741 & 0.05799 \\
  0.0416 & 0.0458 & 0.0462 & 0.0478 & 0.0575 & 0.0570 & 0.0680 & 0.0412 & 0.0409 & 0.05115 \\
  0.0399 & 0.0457 & 0.0449 & 0.0501 & 0.0582 & 0.0585 & 0.0365 & 0.0393 & 0.0393 & 0.04575 \\[2mm]
  \hline \hline\\[-8mm]
\end{tabular}
}
\caption{\textit{Comparing empirical frequencies for the first significant digits
with Benford probabilities for the first $1000$ (top half) and $10000$ (bottom half)
terms of the sequences $(P^{n} - P^*)^{(i,j)}$, where $P$ is the transition
probability matrix in Example \ref{ex:13}.}} \label{tab:3}
\end{table}
\end{exam1}


\section{Applications}\label{s:app}

In scientific calculations using digital computers and floating
point arithmetic, roundoff errors are inevitable, and as Knuth
points out in his classic text \emph{The Art of Computer
Programming} \cite[pp.\ 253--255]{Knuth97},

\begin{quote}
In order to analyze the average behavior of floating-point
arithmetic algorithms (and in particular to determine their average
running time), we need some statistical information that allows us
to determine how often various cases arise \dots  [If, for example, the]
leading digits tend to be small [that] makes the most obvious
techniques of average error estimation for floating-point
calculations invalid. The relative error due to rounding is
usually \dots  \,\! more than expected.
\end{quote}

Thus for the problem of numerical estimation of $P^*$ from $P^n$, it
is important to study the distribution of significant digits (or,
equivalently, the fraction parts of floating-point numbers) of the
components of $(P^{n} - P^*)$ and $(P^{n+1} - P^{n})$.

Theorem~\ref{thm:2} above shows that the components of both $(P^{n}
- P^*)$ and $(P^{n+1} - P^{n})$ typically exhibit exactly the type
of nonuniformity of significant digits alluded to by Knuth: Not only
do the first few significant digits of the differences between the
components of the successive $n$-step transition matrices $P^n$ and
the limiting distribution $P^*$, as well as the differences between
$P^{n+1}$ and $P^{n}$ tend to be small but, much more specifically,
they typically follow BL.

This prevalence of BL has important practical implications for
estimating $P^*$ from $P^n$ using floating-point arithmetic. One
type of error in scientific calculations is overflow (or underflow),
which occurs when the running calculations exceed the largest (or
smallest, in absolute value) floating-point number allowed by the
computer. Feldstein and Turner show that \cite[p.\ 241]{Feldstein86},
``[u]nder the assumption of the logarithmic distribution of
numbers (i.e., BL) floating-point addition and subtraction can
result in overflow and underflow with alarming frequency \dots ''.
Together with Theorem~\ref{thm:2}, this suggests that special
attention should be given to overflow and underflow errors in any
computer algorithm used to estimate $P^*$ from $P^n$.

Another important type of error in scientific computing is due
to roundoff. In estimating $P^*$ from $P^n$, for example, every
stopping rule, such as ``stop when n=1000'' or ``stop when the
components in $(P^{n+1} - P^{n})$ are less than $10^{-10}$'', will
result in some error, and Theorem~\ref{thm:2} shows that
this difference is generally Benford. In fact, justified by
heuristics and by the extensive empirical evidence of BL
in other numerical calculations, analysis of roundoff errors has
often been carried out under the \emph{hypothesis} of a logarithmic
statistical distribution (cf. \cite[p.\ 326]{Feldstein86}).
Therefore, as Knuth pointed out, a naive assumption of uniformly
distributed significands in the calculations tends to
underestimate the average relative roundoff error in cases where the
actual statistical distribution of fraction parts is skewed toward
smaller leading significant digits, as is the case in BL. To obtain
a rough idea of the magnitude of this underestimate when the true
statistical distribution is BL, let $\bm{X}$ denote the absolute roundoff
error at the time of stopping the algorithm, and let $\bm{Y}$ denote the
fraction part of the approximation at the time of stopping. Then the
relative error is $\bm{X}/\bm{Y}$, and assuming that $\bm{X}$ and $\bm{Y}$
are independent random variables, the average (i.e., expected)
relative error is simply $\mathbb{E}\bm{X} \cdot \mathbb{E}(1/\bm{Y})$. Thus
if $\bm{Y}$ is assumed to be uniformly distributed on $[1, 10)$, ignoring
the fact that $\bm{Y}$ is Benford creates an average underestimation of
the relative error by {\em more than one third\/} (cf.\ \cite{Berger07}).

As one potential application of Theorems~\ref{thm:1}
and~\ref{thm:2}, it should be possible to adapt the current
plethora of BL-based goodness-of-fit statistical tests for detecting
fraud (e.g.\ \cite{Carslaw88}), to the problem of detecting whether
or not a sequence of realizations of a finite-state process
originates from a Markov chain, i.e., whether or not the process is
Markov. By Theorem~\ref{thm:2}, conformance with BL for the differences
$P^{n+1} - P^{n}$ is typical in finite-state Markov chains, so a
standard (e.g.\ chi-squared) goodness-of-fit to BL of the empirical
estimates of the differences between $P^{n+1}$ and $P^{n}$ may help
detect non-Markov behavior.
\bigskip

\noindent {\large{\textbf{Acknowledgements}}}
\smallskip
\par\noindent
T.P.\ Hill wishes to express his gratitude to the Vrije Universiteit
Amsterdam, and to the Econometrics Department, especially Professor
Henk Tijms, for their hospitality during several visits when part of
this research was conducted.


\renewcommand{\baselinestretch}{1.0}
\normalsize

\end{document}